\documentclass[a4paper,12pt]{article}

\usepackage{amssymb}
\usepackage{epsfig}
\usepackage{amsfonts}
\usepackage{amsmath}
\usepackage{euscript}
\usepackage{amscd}
\usepackage{amsthm}
\usepackage{theoremref}
\usepackage{enumerate}
\usepackage{array}
\usepackage{mathrsfs}
\usepackage{tikz-cd}
\usepackage{float}

\usepackage{abstract}

\usepackage{lipsum}

\DeclareMathAlphabet{\mathpzc}{OT1}{pzc}{m}{it}
\DeclareMathAlphabet{\mathpzc}{OT1}{pzc}{m}{it}

\newtheorem{thm}{Theorem}[section]
\newtheorem{lem}[thm]{Lemma}
 
\newtheorem{cor}[thm]{Corollary}

\newtheorem{rem}[thm]{Remark}
\newtheorem{ex}[thm]{Example}

\newtheorem{Defn}[thm]{Definition}

\newcommand{\Ap}{A^{\phi}}
\newcommand{\Abar}{\overline{A}}

\newcommand{\phibar}{\overline{\phi}}

\renewenvironment{abstract}
{\begin{quote}
		\noindent \rule{\linewidth}{.5pt}\par{\bfseries \abstractname.}}
	{\medskip\noindent \rule{\linewidth}{.5pt}
	\end{quote}
}

\title{ 
 On triviality of $\mathbb{A}^{2}$-forms admitting a nontrivial $\mathbb{G}_a$-action
}
\author{
	Debojyoti Saha\\
	{\small{\it  Theoretical Statistics and Mathematics  Unit, Indian Statistical Institute,}}\\ 
	{\small{\it 203 B.T.Road, Kolkata-700108, India}}\\
	{\small{\it e-mail : debojyotisaha19@gmail.com}}}

\begin{document}
	
\date{}	
\maketitle


\begin{abstract}
	  In this paper, we give a structure theorem for $\mathbb{A}^2$-forms over arbitrary field extensions admitting a nontrivial $\mathbb{G}_a$-action.  As a consequence of the structure theorem, we prove that any $\mathbb{A}^2$-form is cancellative, a generalization of the Zariski Cancellation Theorem for the affine plane over an arbitrary field.  From the structure theorem we also derive some conditions under which an $\mathbb{A}^2$-form becomes trivial. In particular, we prove that over a field $k$, a factorial $\mathbb{A}^2$-form having a $k$-rational point and a non-trivial $\mathbb{G}_a$-action is trivial and we also give examples demonstrating that none of these hypotheses can be discarded.
	  
\end{abstract}	 

 	\smallskip
 \noindent
 {\small {{\bf Keywords}. }}
 \smallskip
 $\mathbb{A}^2$-form, Exponential Map ($\mathbb{G}_a$-action), Makar-Limanov Invariant, Cancellation.
 \smallskip
 \newline
 {\small{{\bf 2020 MSC}. Primary: 14R10, 13A50; 
 		Secondary: 13B25.

\section{Introduction}

All rings in this article are assumed to be commutative and unital. For an algebra $B$ over a ring $R$, we write ``$B=R^{[n]}$" for some $n\geq 1$ if $B=R[t_1,t_2,...,t_n]$ for some $t_1,t_2,...,t_n$ in $B$  algebraically independent over $R$, i.e, if $B$ is isomorphic to a polynomial ring in $n$ indeterminates over $R$. 
\indent

Let $k$ be any field.
 A $k$-algebra $A$ is said to be an  $\mathbb{A}^n$-form with respect to a field extension $L\mid _k$ if $A\otimes_{k}L=L^{[n]}$. An $\mathbb{A}^n$-form $A$ is said to be trivial if $A=k^{[n]}$.
 It is well-known that every separable $\mathbb{A}^2$-form over $k$ is trivial \cite[Theorem 3]{kambayashi}.
 However, for inseparable field extensions, there exist examples of nontrivial $\mathbb{A}^n$-forms even for $n=1$ (\cite{aasnuma_forms}, \cite{kam_miya}). To determine structure of $\mathbb{A}^n$- forms remains an active area of research. The following table summerizes the current status of research in this direction:  

  \indent

   \begin{table}[H]
   	\centering
   	\begin{tabular}{|c|c|c|c|}
   		\hline
   		 $L|_k$ & $n$ & Structure &\\
   		\hline
   		Separable & $1$ & { Trivial} & \\
   		\hline
   		Separable & $2$ & { Trivial} & T. Kambayashi, 1975 \\
   		\hline
   		Separable & $\geq 3$ & {Open } &\\
   		\hline 
   		Inseparable & $1$ & {Known if characteristic of $k\neq 2$} & T. Asanuma, 2005\\
   		\hline 
   		Inseparable & $\geq 2$ & {Open} &\\
   		\hline
   	\end{tabular}
   \end{table}

 In this paper we obtain a structure of $\mathbb{A}^2$-forms that admit a nontrivial $\mathbb{G}_a$-action. To best of author's knowledge this is first such result for inseparable $\mathbb{A}^n$-forms for $n=2$. Further once this result is obtained, it follows immediately from results of L. Makar-Limanov and P. Russell that any $\mathbb{A}^2$-form is cancellative. We also deduce a set of conditions under which an $\mathbb{A}^2$-form becomes trivial and give three examples demonstrating that these conditions are minimal.
 
 \indent
 
  {\it A $k$-algebra admitting nontrivial $\mathbb{G}_a$-actions is called   non-rigid, otherwise it is called rigid}. In particular $k^{[2]}$ is non-rigid. 
We establish the following structure theorem for nonrigid $A^2$-forms (Theorem \ref{strctr_thm_nr_forms}).

\indent

{\bf Theorem A.} Let $A$ be a non-rigid $\mathbb{A}^2$-form with respect to a field extension $L|_k$. Then $A=B^{[1]}$, where $B$ is an $\mathbb{A}^1$-form.

\indent

Recall that a $k$-algebra $A$ is said to be cancellative if for any $k$-algebra $B$, $A^{[1]}\cong_{k}B^{[1]}\Rightarrow A\cong_{k}B$. The Zariski Cancellation Theorem for affine plane states that when $A=k^{[2]}$, $A$ is cancellative. To be more precise, we have the following:

\begin{thm}[Zariski Cancellation Theorem]\label{zct}	
	Let $k$ be any field. Suppose $B$ is any $k$-algebra such that $B^{[1]}\cong_{k} k^{[3]}$. Then $B\cong_{k}k^{[2]}$.
\end{thm}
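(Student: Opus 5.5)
This is the classical Zariski Cancellation Theorem for the affine plane, which the paper recalls rather than proves. I would follow the standard route through the Makar-Limanov invariant and locally nilpotent derivations (exponential maps in positive characteristic). The idea is to show that $B$ is a non-rigid factorial affine surface with trivial units, and then to identify it with $k^{[2]}$.

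\textbf{Step 1: basic properties of $B$.} Suppose $B^{[1]}\cong_k k^{[3]}$. Then $B$ is a finitely generated $k$-algebra: it is a retract of $k^{[3]}$, namely the image of the map sending the extra variable to $0$. It is a UFD of dimension $2$, and $B^*=k^*$. Moreover $B\otimes_k \bar k$ satisfies the same hypothesis over $\bar k$.

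\textbf{Step 2: $B$ is non-rigid.} I would use the Makar-Limanov invariant $\mathrm{ML}$, the intersection of the rings of invariants of all exponential maps.
\begin{enumerate}[(a)]
\item If $B$ were rigid, then $\mathrm{ML}(B)=B$.
\item The key lemma (Makar-Limanov, Crachiola) states that for an affine domain $B$, $\mathrm{ML}(B)=B$ implies $\mathrm{ML}(B^{[1]})=\mathrm{ML}(B)$.
\item But $\mathrm{ML}(k^{[3]})=k$, which contradicts (b).
\end{enumerate}
So $B$ admits a nontrivial exponential map $\phi$. The proof of (b) is the main obstacle. I would filter $B[t]$ by $t$-degree and pass to the associated graded exponential map. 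One shows that any exponential map on $B[t]$ induces, through its leading homogeneous part, a nontrivial exponential map on $B$, unless it fixes $B$.

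\textbf{Step 3: the ring of invariants.} The invariant ring $A=B^{\phi}$ has transcendence degree $1$, is factorially closed in $B$, and is therefore a UFD with $A^*=k^*$. By Zariski's finiteness theorem for $\dim 1$ it is also finitely generated. Since it is a retract-like subring of a ring with geometrically polynomial behaviour, it has genus $0$. Passing to $\bar k$ if necessary, I would conclude that $A=k[x]$.

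\textbf{Step 4: $B=A^{[1]}$.}
\begin{enumerate}[(a)]
\item Localizing at $S=A\setminus\{0\}$ gives $S^{-1}B=k(x)^{[1]}$, by the slice construction.
\item Write $B$ as an $\mathbb{A}^1$-fibration over $\mathbb{A}^1$. Factoriality and $B^*=k^*$ force all fibres to be reduced and irreducible.
\item The fibres then need to be identified with $\mathbb{A}^1$. Over $\bar k$ this uses the ambient polynomial structure $B^{[1]}=k^{[3]}$: the fibres of $B$ times $\mathbb{A}^1$ are fibres of a coordinate-like map. The identification is then completed via Kambayashi--Miyanishi type results, which say that an $\mathbb{A}^1$-fibration with all fibres $\mathbb{A}^1$ over $\mathbb{A}^1$ is trivial.
\end{enumerate}
This gives $B=k[x]^{[1]}=k^{[2]}$. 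Descent from $\bar k$ to $k$ is Kambayashi's separable-forms theorem together with the purely inseparable handling as in Russell's work.

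The hardest parts are Step 2(b) and the inseparable descent in positive characteristic.
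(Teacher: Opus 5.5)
Your Steps 1, 2 and 4(b) are sound. Step 2 is exactly Theorem~\ref{rigidity_thm}, which holds over any field, so you may cite it. In 4(b), since $A=B^{\phi}$ is factorially closed in the UFD $B$, each irreducible $f\in A$ stays prime in $B$.

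The genuine gap is your last sentence. For imperfect $k$ the extension $\overline{k}/k$ is not separable, so Kambayashi's theorem does not apply. There is also no ``purely inseparable handling'' in Russell's work to borrow: his cancellation theorem is over perfect fields, and the imperfect case was settled only later by Bhatwadekar--Gupta. No abstract descent principle can do this job, because purely inseparable $\mathbb{A}^2$-forms can be nontrivial (Examples~\ref{ex 2}--\ref{ex 4}). You have to use structure specific to $B$.

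The same flaw appears in Step 3. ``Genus $0$'' and ``passing to $\overline{k}$'' only show that $A$ is an $\mathbb{A}^1$-form, and nontrivial factorial $\mathbb{A}^1$-forms exist (the ring $k[X^p,t+X+uX^p]$ of Example~\ref{ex 2}). The repair there is easy: $A$ is a normal one-dimensional $k$-subalgebra of $B\subseteq B^{[1]}\cong k^{[3]}$, so $A=k^{[1]}$ by Theorem~\ref{AEH}.

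Separately, in 4(c) your identification of the fibres over $\overline{k}$ via a ``coordinate-like map'' is unsupported. Nothing tells you a priori that $x$ is a variable of $B[t]\cong k^{[3]}$; that is a consequence of the theorem, not an input. Over $\overline{k}$ you can fix this with known results on $\mathbb{A}^1$-fibrations, or simply cite cancellation over algebraically closed fields, as the paper does.

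The missing idea is the paper's Theorem~\ref{strctr_thm_nr_forms}. It replaces descent by an argument with a slice defined over $k$; the paper obtains the statement as the case $A=k^{[2]}$ of Theorem~\ref{callelation of A^2-forms}. The argument runs as follows.

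Keep $\phi$ on $B$ over $k$ with $A=B^{\phi}=k[x]$. Using Lemma~\ref{local_slice_thm}, assume local slices exist.

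Cancellation over $\overline{k}$ gives $\overline{B}:=B\otimes_k\overline{k}\cong\overline{k}^{[2]}$. Its invariant ring under $\overline{\phi}=\phi\otimes{\rm id}$ is $\overline{k}[x]$, so Lemma~\ref{ker_exp_plane} yields $\overline{B}=\overline{k}[x][Y]$.

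Pick $F\in B$ with $\phi(F)=F+\pi U$ and $\deg_x\pi$ minimal. From $\overline{B}_{\pi}=\overline{k}[x]_{\pi}[F]=\overline{k}[x]_{\pi}[Y]$ you get $F=\lambda Y+\mu$ with $\lambda,\mu\in\overline{k}[x]$, hence $\pi=\lambda\,\overline{\phi}^1(Y)$.

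Write $Y=\sum a_i\otimes l_i$ with $a_i\in B$ and the $l_i$ linearly independent over $k$. This forces $\deg_{\phi}a_i\le 1$ and $\overline{\phi}^1(Y)=\sum\phi^1(a_i)\otimes l_i$. By minimality, $\deg_x\overline{\phi}^1(Y)=\max_i\deg_x\phi^1(a_i)\ge\deg_x\pi$.

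Therefore $\lambda\in\overline{k}^*$, so $k[x,F]\otimes_k\overline{k}=\overline{B}$. Faithful flatness then gives $B=k[x,F]=k^{[2]}$.
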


Theorem \ref{zct} was established by Fujita, Miyanishi and Sugie (\cite{fujita}, \cite{miyanishi_sugie}) in characteristic 0, by Russell \cite{russell} over perfect fields and later by Bhatwadekar-Gupta \cite{bhatwadekar_gupta} over arbitrary fields. As an application of Theorem A, we prove the following generalization (Theorem \ref{callelation of A^2-forms}).

\indent

\noindent
{\bf Theorem B.} Let $A$ be an $\mathbb{A}^2$-form over a field $k$ with respect to a field extension $L|_k$. Let $B$ be a $k$-algebra such that $A^{[1]}\cong_{k}B^{[1]}$. Then $A\cong_{k}B$.

\indent

   The following result on $\mathbb{A}^1$-forms follows from a theorem of T. Asanuma \cite[Theorem 4.8]{aasnuma_forms}. A self contained proof can be found in \cite[Corollary 4]{pdas}, where P. Das proves a more general result quoted in Remark \ref{remark P.das} below.

\begin{thm}\label{asanuma_u.f.d_retract}
	Let $k$ be any field. Let $A$ be an $\mathbb{A}^1$-form with respect to a field extension $L|_k$. Suppose the following hold:
	
	\begin{enumerate}
		\smallskip
		\item [{\rm (i)}] $A$ is a U.F.D
		
		\item [{\rm (ii)}] $A$ has a $k$-rational point.
	\end{enumerate}
	Then $A=k^{[1]}$.
	
\end{thm}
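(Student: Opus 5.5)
We sketch a self-contained argument for Theorem \ref{asanuma_u.f.d_retract} that uses the geometry of the regular completion of $A$. The idea is to produce an element $x\in A$ generating the maximal ideal of the rational point, and then show that $x$ generates $A$.

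\textbf{Step 1: basic properties of $A$.} Since $A\otimes_k L=L^{[1]}$ and $L$ is faithfully flat over $k$, the ring $A$ is a finitely generated $k$-domain of dimension one. Moreover $A^*=k^*$ and $k$ is algebraically closed in $A$; both follow by comparing with $L[t]^*=L^*$. By (i), $A$ is a Dedekind U.F.D., hence a P.I.D. By (ii) there is a $k$-algebra map $A\to k$. Its kernel $\mathfrak m$ is principal, say $\mathfrak m=xA$. Then $A/xA=k$, and $k[x]\subseteq A$ with $x$ transcendental over $k$.

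\textbf{Step 2: the curve at infinity.} Let $C$ be the regular projective curve over $k$ with function field $\mathrm{Frac}(A)$, so that $\mathrm{Spec}\,A=C\setminus\{P_1,\dots,P_r\}$. Base change to $L$ gives a curve containing $\mathbb{A}^1_L$ with a single point at infinity. Since $A^*=k^*$ and $A$ is a U.F.D., the class group of $C$ is generated by the $P_i$. Any relation $\sum n_iP_i=\mathrm{div}(f)$ would make $f$ a unit of $A$, so $f\in k^*$ and the relation is trivial; hence the $P_i$ are independent in the class group. After base change only one point lies over infinity, which forces $r=1$, and we write $P=P_\infty$. Then $\mathrm{Pic}(A)=0$ says exactly that $\mathrm{Pic}(C)\cong\mathbb Z$, generated by $P_\infty$.

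\textbf{Step 3: $P_\infty$ is rational.} The divisor of $x$ on $C$ is $\mathrm{div}(x)=Q-dP_\infty$, where $Q$ is the rational point corresponding to $\mathfrak m$ (it appears with multiplicity one because $xA=\mathfrak m$) and $d=\deg P_\infty$. Taking degrees gives $1-d\deg P_\infty$, which must be zero. Hence $\deg P_\infty=1$, and $\mathrm{div}(x)=Q-P_\infty$. Therefore $x:C\to\mathbb P^1_k$ has degree one, so it is an isomorphism and $\mathrm{Frac}(A)=k(x)$. It sends $P_\infty$ to $\infty$, so it identifies $C\setminus P_\infty$ with $\mathbb{A}^1$, giving $A=k[x]$.

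\textbf{Main obstacle.} The delicate step is Step 2: knowing there is a single place at infinity and that the degree computation is legitimate. Over inseparable $L$, the curve $C_L$ may be non-normal at infinity, but it is still a single point. This is because the complement of $\mathbb{A}^1_L$ in its closure is one point set-theoretically, and the points of $C$ at infinity map onto points of $C_L$ at infinity. I would verify this via the valuations of $\mathrm{Frac}(A)$ that are nonnegative on $k$ but not on $A$: each such valuation extends to $\mathrm{Frac}(A\otimes L)=L(t)$ as a valuation not containing $L[t]$, so it must be the degree valuation. This yields $r=1$.
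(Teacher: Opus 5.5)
Your argument is correct in substance, but it takes a genuinely different and much longer route than the one the paper relies on. The paper gives no proof of its own. It cites Asanuma's Theorem 4.8 and, for a self-contained argument, Das's Corollary 4, whose statement (Remark \ref{remark P.das}) holds for forms over an arbitrary base ring $R$, where no projective completion is available.

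Your Step 1 already contains everything needed for a short algebraic finish. Since $L$ is flat over $k$, tensoring $0\to\mathfrak m\to A\to k\to 0$ with $L$ shows that the kernel of the induced $L$-algebra retraction $A\otimes_k L=L[t]\to L$, $t\mapsto c$, is both $(t-c)L[t]$ and $\mathfrak m\otimes_k L=x\,L[t]$. Hence $x=\lambda(t-c)$ with $\lambda\in L[t]^*=L^*$. So $k[x]\otimes_k L=L[x]=L[t]=A\otimes_k L$, and faithful flatness of $L$ over $k$ gives $A=k[x]$.

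This bypasses the regular completion $C$, the claim of a single place at infinity, and the degree formula for principal divisors over an imperfect field. It also does not depend on $\mathrm{Spec}\,A$ being a curve over a field, which is what Das's generalization to a base ring requires. What your route buys instead is geometric information: the completion of $\mathrm{Spec}\,A$ is $\mathbb P^1_k$ with a rational point at infinity. You also see precisely how the rational point (via $\deg Q=1$) and factoriality (via $\mathfrak m=xA$) force $[\mathrm{Frac}(A):k(x)]=1$.

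If you keep your version, three small points need fixing:
\begin{enumerate}
\item In Step 3, $d$ must be the pole order $-v_{P_\infty}(x)$, not $\deg P_\infty$. The relation $1=d\cdot\deg P_\infty$ then forces both to equal $1$.
\item In the valuation argument for $r=1$, the extension $w$ of $v$ to $L(t)$ must be chosen with $L\subseteq O_w$. Without this, $L[t]\not\subseteq O_w$ does not force $w$ to be the degree valuation. The condition is automatic when $L|_k$ is algebraic. In general, take a prime of $O_v\otimes_k L$ lying over $\mathfrak m_v$ (it exists by faithful flatness) and a valuation ring of $L(t)$ dominating it.
\item The class group discussion in Step 2 is never used afterwards and can be dropped.
\end{enumerate}
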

As an application of Theorem A we get the following analogue of Theorem \ref{asanuma_u.f.d_retract} for $\mathbb{A}^2$-forms  (Theorem \ref{mtheo1}).


\indent

\noindent
{\bf Theorem C.} Let $A$ be a non-rigid factorial $\mathbb{A}^{2}$-form over a field $k$ with respect to a field extension $L\mid_k$ having a $k$-rational point. Then $A=k^{[2]}$.

\indent

  We also give examples demonstrating that none of the assumptions in Theorem B can be discarded; in particular, the hypotheses in the theorem are minimal. 

\indent

  





 
 {\bf Acknowledgment.} The author expresses his heartfelt thanks to professor Neena Gupta for her valuable suggestions.
  
  \section{Preliminaries}
 Let $A$ be an affine $k$-domain. We first recall the definition of an exponential map, the equivalent notion of a $\mathbb{G}_a$-action, on $A$. 
  
  \begin{Defn}
  		\rm{Let $\phi: A\longrightarrow A^{[1]}$ be a $k$-algebra homomorphism. For any indeterminate $U$ over $A$, let  us denote the map $\phi: A\longrightarrow A[U]$ by the notation $\phi_U$. The map $\phi$ is said to be an exponential map on $A$ if:}
  	
  	\begin{enumerate}
  		\item[\rm(i)] $\epsilon_0\circ\phi_U $ is identity on $A$, where $\epsilon_0: A[U]\longrightarrow A$  is the evaluation map at $U=0$.
  		
  		\item[\rm(ii)] $\phi_V\circ\phi_U=\phi_{V+U}$, where $\phi_V: A[U]\longrightarrow A[U,V]$ is the extension of the  homomorphism $\phi_V: A\longrightarrow A[U]$ by $\phi_V(U)=U$.
  	\end{enumerate} 
  \end{Defn}
   The subring $A^{\phi}:=\{a\in A: \phi(a)=a\}$ is called the ring of invariants of the map $\phi$. The exponential map $\phi$ is called nontrivial if $A\neq A^{\phi}$.
   We denote the set of all exponential maps on $A$ by ${exp}(A)$. The Makar-Limanov invariant of $A$ denoted by ${\rm ML}(A)$ is defined as \[{\rm ML}(A)=\bigcap_{\phi\in exp(A)} A^{\phi}.\] 
   
   \indent
    Given an exponential map $\phi:A\longrightarrow A[U]$, define, for each $n\geq 0$ , $k$-linear functions $\phi^n: A\longrightarrow A$ by $\phi^n(a)= $ the coefficient of $U^n$ in the polynomial $\phi(a)$. For each $a\in A$ denote 
  ${\rm deg}_U(\phi(a))$ by ${\rm deg}_{\phi}(a)$. It follows that  
    
    	\begin{enumerate}
    		\item[\rm(i)]  ${\rm deg}_{\phi}(a)=-\infty \Leftrightarrow a=0.$
    		\item[\rm(ii)] ${\rm deg}_{\phi}(ab)={\rm deg}_{\phi}(a)+{\rm deg}_{\phi}(b).$
    		\item[\rm(iii)]  ${\rm deg}_{\phi}(a+b)\leq max\{{\rm deg}_{\phi}(a),{\rm deg}_{\phi}(b)\}$.
    	\end{enumerate}
   We note that $A^{\phi}=\{a\in A: {\rm deg}_{\phi}(a)=0\}$. Thus it follows that $ A^{\phi}$ is inert in $A$, i.e, $ab\in A^{\phi}\implies a, b \in A^{\phi} $ for all non-zero $a, b\in A$. In particular, $ A^{\phi}$ is algebraically closed in $A$ and if $A$ is a U.F.D then $ A^{\phi}$ is also a U.F.D.

   	
   	
  
   
   
  We record the following lemma which can be found in \cite[Lemma 2.2 (c) and (e)]{crachiola_makar_limanov}.
  
  \begin{lem}\label{local_slice_thm}
  	Let $\phi$ be any non-trivial exponential map on an affine $k$-domain $A$. Then there exists another exponential map $\psi:A\longrightarrow A[U]$ on $A$ such that $A^{\psi}=A^{\phi}$ and the set $S:=\{a\in A| {\text deg}_{\psi}(a)=1\}$ is non-empty. Further, for any $x\in S$ with $\psi(x)=x+\pi U$, we have $A_{\pi}=A^{\psi}_{\pi}[x]\cong_{k}(A^{\psi}_{\pi})^{[1]}.$ 
  \end{lem}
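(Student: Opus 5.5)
The plan has two parts. First, modify $\phi$ by a Frobenius reparametrization of $U$ so that an element of $\psi$-degree $1$ appears. Second, prove the localization statement by the standard slice argument, using a Dixmier-type map.

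\textbf{Step 1: the map $\psi$.} I use the composition rule for the coefficient maps, which follows from axiom (ii):
\[\phi^i\circ\phi^j=\binom{i+j}{i}\phi^{i+j}.\]
In characteristic $0$, or whenever $\phi^1\neq 0$, I take $\psi=\phi$. Now suppose $\operatorname{char}k=p>0$ and $\phi^1=0$. From $\phi^1\circ\phi^{j-1}=j\,\phi^j$ we get $\phi^j=0$ for all $j$ with $p\nmid j$. More generally, let $e\ge 0$ be maximal with $\phi^j=0$ for every $j$ not divisible by $p^e$. Such an $e$ exists because $\phi$ is nontrivial and $A$ is affine, so only finitely many degrees occur on a finite generating set. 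Then $\phi(A)\subseteq A[U^{p^e}]$. I define $\psi$ by substituting $U^{p^e}\mapsto U$. Because $(U+V)^{p^e}=U^{p^e}+V^{p^e}$, the map $\psi$ is again an exponential map, and clearly $A^\psi=A^\phi$. By maximality of $e$, we now have $\psi^1\neq 0$.

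\textbf{Producing a degree-one element.} Choose $b\in A$ with $\psi^1(b)\neq 0$, and let $n=\deg_\psi b$. The elements $\psi^i(b)$ satisfy $\deg_\psi\psi^i(b)\le n-i$. This holds because $\psi^s(\psi^i(b))=\binom{i+s}{s}\psi^{i+s}(b)$ vanishes for $s>n-i$. I will choose $b$ so that $n$ is minimal among all elements with nonzero $\psi^1$. I then look for an index $i$ such that $\psi^i(b)$ has degree exactly $1$. The candidate is $x=\psi^{n-1}(b)$, when $p\nmid n$. In that case $\psi^1(x)=n\,\psi^n(b)\neq 0$ and $\psi^s(x)=0$ for $s\ge 2$, so $\deg_\psi x=1$.

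When $p\mid n$, I would instead use Lucas' theorem to pick $i$. The goal is an $i$ with $\binom{i+1}{1}\psi^{i+1}(b)\neq 0$ and $\binom{i+s}{s}\psi^{i+s}(b)=0$ for $s\ge2$. The natural choice is the largest index $m$ with $p\nmid m$ and $\psi^m(b)\neq 0$, taking $i=m-1$ and possibly replacing $b$ by a suitable $\psi^j(b)$ to kill the higher terms. This combinatorial choice is the step I expect to be the main obstacle. The fallback is to minimize degree inside the set $\{a:\psi^1(a)\neq 0\}$ and argue by contradiction with that minimality.

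\textbf{Step 2: the localization.} Take $x\in S$ with $\psi(x)=x+\pi U$. Comparing coefficients in $\psi_V\psi_U=\psi_{U+V}$ shows $\pi\in A^\psi$, and $\pi\neq0$. Since $\deg_\psi x=1$, $x\notin A^\psi$; as $A^\psi$ is algebraically closed in $A$, the element $x$ is transcendental over $A^\psi$.

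For surjectivity, extend $\psi$ to $A_\pi$ and consider the ring map $A_\pi\to A_\pi$ given by
\[a\mapsto \psi(a)\big|_{U=-x/\pi}.\]
Using axiom (ii), its image lies in $A^\psi_\pi$. Then $a=\sum_i \psi^i(a)\,(-x/\pi)^i$ rearranges to an expression of $a$ as a polynomial in $x$ over $A^\psi_\pi$. Hence $A_\pi=A^\psi_\pi[x]\cong (A^\psi_\pi)^{[1]}$.
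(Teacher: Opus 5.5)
Your Step 1 has a genuine gap in characteristic $p$, and it is not a combinatorial technicality. After the Frobenius reparametrization, $\psi^1\neq 0$ does not give an element of $\psi$-degree one, because such an element need not exist. Take $A=k[x,y]$ with ${\rm char}\,k=p$, and set $\phi(x)=x+U+U^p$, $\phi(y)=y$; this is an exponential map because $U+U^p$ is additive. Here $\phi^1(x)=1\neq 0$, so your $e$ is $0$ and $\psi=\phi$. Yet $\deg_\phi f=p\cdot\deg_x f$ for every nonzero $f$, so $S=\emptyset$ (and $A^\phi=k[y]$). Any further substitution $U\mapsto Q(U)$ only multiplies all degrees by $\deg Q$, so no reparametrization of $\phi$ can help. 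Hence neither your Lucas-type choice of index nor the fallback minimality argument can succeed: $\psi$ has to be a genuinely new exponential map.

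The missing idea is to build $\psi$ through a localization. Let $x$ have minimal positive $\phi$-degree $d$ and set $a=\phi^d(x)$. Suppose $\deg_\phi b=m>0$ and $0<s<d$. Then $\phi^{m-s}(b)$ has degree $\le s<d$, hence lies in $A^\phi$, so $\binom{m}{s}\phi^m(b)=0$. Taking $b=x$ shows that $d$ is $1$ or a power $p^e$, and Lucas' theorem then gives $d\mid m$ for every such $m$. Since $a$ and $\phi^m(b)$ are invariant, $a^{m/d}b-\phi^m(b)x^{m/d}$ has smaller degree, and induction yields $A_a=A^\phi_a[x]$ with $x$ transcendental. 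Now define $\psi$ on $A_a$ as the $A^\phi_a$-algebra map $x\mapsto x+a^NU$. Because $A$ is affine, for $N\gg 0$ this map sends a finite generating set, hence all of $A$, into $A[U]$. Moreover $A^\psi=A\cap A^\phi_a=A^\phi$ because $A^\phi$ is factorially closed, and $\deg_\psi x=1$. (The paper gives no proof; it just quotes the lemma from Crachiola--Makar-Limanov.)

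Your Step 2 is fine in substance, but the displayed identity is wrong. Writing $\sigma(a)=\psi(a)|_{U=-x/\pi}$, the sum $\sum_i\psi^i(a)(-x/\pi)^i$ is $\sigma(a)\in A^\psi_\pi$, not $a$. The identity you need is $a=\sum_i\sigma(\psi^i(a))(x/\pi)^i$. It follows by evaluating $\psi_U(\psi_T(a))=\psi_{T+U}(a)$ at $U=-x/\pi$ and then at $T=x/\pi$. Alternatively, induct on $\psi$-degree using $\pi^m a-\psi^m(a)x^m$.
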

  
    The following rigidity theorem is due to Crachiola and Makar-Limanov \cite[Theorem 3.1]{crachiola_makar_limanov}.
  
  \begin{thm}\label{rigidity_thm}
  	Let $k$ be any field. Suppose $A$ is an affine $k$-domain with ${\rm ML}(A)=A$. Then ${\rm ML}(A^{[1]})=A$.
  \end{thm}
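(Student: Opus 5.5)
The statement is Theorem~\ref{rigidity_thm}. I would prove it by passing from an arbitrary exponential map on $B:=A[t]=A^{[1]}$ to a homogeneous one, using the $t$-degree filtration. Since $A$ is trivially inside every ring of invariants on the $A$-side, the goal is: for every $\phi\in exp(B)$ we have $A\subseteq B^{\phi}$. Then ${\rm ML}(B)\supseteq A$. The reverse inclusion is easy: the maps $a\mapsto a$, $t\mapsto t+U$ are exponential with invariant ring $A$, so ${\rm ML}(B)\subseteq A$.

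\textbf{Step 1: the homogeneous map $\overline{\phi}$.}
\begin{itemize}
\item Grade $B=\bigoplus_{n\ge0}At^n$ by $t$-degree. The associated graded ring of the $t$-degree filtration is canonically $B$ again, and for $f\ne0$ write $\overline f$ for its leading form.
\item Fix a nontrivial $\phi$ and assign $U$ a weight $d\in\mathbb{Q}$. Extend the $t$-degree to $B[U]$ as a weighted degree with ${\rm deg}(U)=d$.
\item For each $f$, take the top-degree part of $\phi(f)$ with respect to this weighted degree. I would choose $d$ as follows, by the standard Makar-Limanov type argument:
\begin{itemize}
\item the quantities ${\rm deg}(\phi(f))-{\rm deg}(f)$ are bounded above, since $B$ is finitely generated and ${\rm deg}$ is a degree function on $B[U]$;
\item pick $d$ so that the top-degree part of $\phi(f)$ always has total degree ${\rm deg}(f)$ and is nonconstant in $U$ for some $f$.
\end{itemize}
\item Define $\overline{\phi}(\overline f):=$ the top part of $\phi(f)$.
\item Check three things.
\begin{itemize}
\item $\overline{\phi}$ is a well-defined exponential map on $B$. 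Axioms (i) and (ii) pass to leading forms because $B[U,V]$ is a domain, so leading forms multiply.
\item $\overline{\phi}$ is homogeneous of degree $0$.
\item ${\rm deg}_{\overline{\phi}}(\overline f)={\rm deg}_{\phi}(f)$, since the leading $U$-coefficient of $\phi(f)$ survives in the top part.
\end{itemize}
\end{itemize}
In particular, for $a\in A$ we have $\overline a=a$, so it suffices to show ${\rm deg}_{\overline{\phi}}(a)=0$.

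\textbf{Step 2: restricting $\overline{\phi}$ to $A$.} For $a\in A$ (of degree $0$), homogeneity gives $\overline{\phi}^i(a)\in At^{-id}$ for every $i$. There are three cases.
\begin{itemize}
\item If $d>0$: then $\overline{\phi}^i(a)=0$ for all $i>0$, so $a$ is invariant.
\item If $d=0$: then $\overline{\phi}(A)\subseteq A[U]$, so $\overline{\phi}$ restricts to an exponential map on $A$. Since ${\rm ML}(A)=A$, this restriction is trivial.
\item If $d<0$ (after clearing denominators, $-d=m/r$): write $\overline{\phi}^i(a)=c_i(a)t^{-id}$, forcing $c_i(a)=0$ unless $-id\in\mathbb{Z}$. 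Then $\overline{\phi}(a)=\sum_i c_i(a)W^i$ with $W:=t^{-d}U$ when $d\in\mathbb{Z}$, with the obvious modification otherwise. The maps $a\mapsto\sum c_i(a)W^i$ define a $k$-homomorphism $A\to A[W]$. Axioms (i) and (ii) follow from those of $\overline{\phi}$, because $t$ is a non-zerodivisor and $t^{-d}U$, $t^{-d}V$ add like $U$, $V$. This gives an exponential map on $A$, which is trivial by rigidity of $A$, so $c_i(a)=0$ for $i>0$.
\end{itemize}
In all cases ${\rm deg}_{\overline{\phi}}(a)=0$, so ${\rm deg}_{\phi}(a)=0$ by Step 1, i.e.\ $a\in B^{\phi}$.

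\textbf{Main obstacle.} I expect the hard part to be Step 1: making the induced map $\overline{\phi}$ well defined and genuinely exponential. This needs a careful choice of the weight $d$, so that passing to top parts is compatible both with composition $\phi_V\circ\phi_U$ and with the identity ${\rm deg}_{\overline{\phi}}(\overline f)={\rm deg}_{\phi}(f)$. The fractional-weight bookkeeping in the case $d<0$ is a secondary technical point.
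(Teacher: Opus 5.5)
The gap is the last assertion of your Step~1, $\deg_{\overline{\phi}}(\overline f)=\deg_{\phi}(f)$, which the final sentence of Step~2 relies on. It is false. The coefficient of $U^i$ in the top part of $\phi(f)$ is nonzero only when $\deg_t\phi^i(f)+id=\deg_t f$, and nothing forces the highest $U$-power to satisfy this. In general you only have $\deg_{\overline{\phi}}(\overline f)\le\deg_\phi(f)$, i.e.\ the inclusion $\overline{B^{\phi}}\subseteq B^{\overline{\phi}}$. That transfers invariance from $\phi$ to $\overline{\phi}$, the opposite of what you need. Two counterexamples (Step~1 is a general claim made before rigidity is used, so non-rigid $A$ is a fair test):
\begin{itemize}
\item Take $A=k[y]$ with ${\rm char}\,k\neq 2$, $\phi(t)=t+U$, $\phi(y)=y+2tU+U^2$. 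The weight is forced to be $d=-1$, and $\overline{\phi}(t)=t$, $\overline{\phi}(y)=y+2tU$. So $\deg_{\overline{\phi}}(y)=1<2=\deg_\phi(y)$.
\item With $\phi(y)=y+U$, $\phi(t)=t$, one gets $d=0$ and $\overline{\phi}=\phi$. Then $f=t+y$ is moved although $\overline f=t$ is invariant.
\end{itemize}
Hence $A\subseteq B^{\overline{\phi}}$, which is all Step~2 yields, does not give $A\subseteq B^{\phi}$. There is a second hole in Step~2 for $d<0$. Your check of axiom (ii) for $a\mapsto\sum_i c_i(a)W^i$ silently assumes $\overline{\phi}(t)=t$, because $\overline{\phi}_V$ also acts on the factor $t^{-id}$ of $\overline{\phi}^i(a)$. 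Moreover, for non-integral $d=-m/r$ the ``obvious modification'' is not spelled out, and $(U+V)^{jr}$ need not be a polynomial in $U^r,V^r$.

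Both points can be repaired with your own ingredients, and the repair shows that $d\le 0$ cannot occur.
\begin{enumerate}
\item If $d\le 0$, homogeneity gives $\overline{\phi}^i(t)\in At^{1-id}\subseteq tB$ for all $i\ge 1$.
\item The top coefficient $\overline{\phi}^{r}(t)$, with $r=\deg_{\overline{\phi}}(t)$, is $\overline{\phi}$-invariant, and $B^{\overline{\phi}}$ is inert. So $t$ can divide it only if $r=0$, i.e.\ $\overline{\phi}(t)=t$.
\item Then $\overline{\phi}$ is $k[t]$-linear and preserves $(t-1)B$. It therefore induces on $B/(t-1)B\cong A$ the exponential map $a\mapsto\sum_i c_i(a)U^i$, where $\overline{\phi}^i(a)=c_i(a)t^{-id}$.
\item Rigidity of $A$ kills every $c_i$ with $i\ge 1$. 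So $\overline{\phi}$ fixes both $A$ and $t$, contradicting its nontriviality.
\item Hence $d>0$. Now the inequality that defined $d$, namely $\deg_t\phi^i(a)+id\le 0$ for $0\neq a\in A$, forces $\phi^i(a)=0$ for $i\ge 1$. So $\phi$ itself fixes $A$, with no comparison of $\deg_{\overline{\phi}}$ and $\deg_\phi$ needed.
\end{enumerate}
Alternatively, once $A\subseteq B^{\overline{\phi}}$ is secured, argue as follows. Transcendence degree and algebraic closedness of $A$ in $A[t]$ give $B^{\overline{\phi}}=A$. The correct inclusion $\overline{B^{\phi}}\subseteq B^{\overline{\phi}}$ then gives $B^{\phi}\subseteq A$. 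Finally $B^{\phi}=A$ by algebraic closedness of $B^{\phi}$ and equality of transcendence degrees.

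The step you flagged as the main obstacle is actually harmless. Since $\deg_t$ comes from a grading and every nonzero element of $A$ has degree $0$, a weight chosen from the finitely many inequalities on generators of $A$ and on $t$ works for all of $B$. The paper itself gives no proof of this statement; it quotes it from Crachiola and Makar-Limanov.
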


   The following result is well-known (see \cite[2.6]{AEH}).
   \begin{thm}\label{AEH}
   	Let $k$ be any field. Then any one dimensional normal $k$-subalgebra $A$ of a polynomial ring $k^{[n]}$ is $k^{[1]}$. 
   \end{thm}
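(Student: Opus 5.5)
The plan is to reduce to the one-variable situation by specialising $k^{[n]}$ onto a line, and then to analyse the places at infinity of the curve $\mathrm{Spec}\,A$. Write $B=k[x_1,\dots,x_n]=k^{[n]}$ and $K={\rm Frac}(A)$.

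\textbf{Step 1 (embed $A$ in one variable).} Fix $a\in A\setminus k$. Choose a Kronecker substitution $\sigma:B\to k[s]$, $x_i\mapsto s^{N^i}$ with $N$ larger than every exponent occurring in $a$. This $\sigma$ is injective on monomials of bounded degree, so $\sigma(a)\notin k$; the construction works over any field, finite or not. The kernel $\mathfrak p=\ker(\sigma|_A)$ is a prime of the one-dimensional domain $A$. If $\mathfrak p\neq 0$, it would be maximal, so $\sigma(A)$ would be a field that is a $k$-subalgebra of $k[s]$. Any field inside $k[s]$ lies in $k$, which contradicts $\sigma(a)\notin k$. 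Hence $A\hookrightarrow k[s]$, and so $k\subsetneq K\subseteq k(s)$. By Lüroth's theorem, which holds over an arbitrary field, $K=k(t)$ for some $t$.

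\textbf{Step 2 (structure of $A$ inside $k(t)$).} Since $A\subseteq k[s]$, we have $A^*=k^*$. Also $k$ is algebraically closed in $A$. Because $A$ is a one-dimensional normal domain with fraction field $k(t)$, it is a Krull (Dedekind-type) domain. I would show that $A$ is the intersection of the valuation rings of $k(t)/k$ whose places are centred on $A$. The complement $S$ of these places is nonempty, since $A\neq K$. If $S$ contained two distinct places $P,Q$, then some combination $\deg(Q)\,P-\deg(P)\,Q$ is a principal divisor on $\mathbb P^1_k$. Its generator would be a nonconstant unit of $A$, which is impossible. So $S=\{P\}$, and $A$ is the ring of functions on $\mathbb P^1_k$ regular away from the single closed point $P$. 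This part uses finiteness properties of Dedekind domains in function fields and is standard. I expect the main technical care to be needed in checking that $A$ coincides with the full ring of functions regular off $P$, rather than being a proper subring; normality and $\dim A=1$ should give this.

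\textbf{Step 3 (the point at infinity is rational).} The ring $A\subseteq k[s]$ takes no poles at finite places of $k(s)$. Hence the unique place $\infty$ of $k(s)$ lying over $P$ must be the place at infinity of $k(s)$. This uses that some $a\in A$ has a pole at $P$, and that any place of $k(s)$ at which $a$ has a pole lies over $P$. The residue field of $P$ embeds into the residue field of $\infty$, which is $k$. Therefore $\deg P=1$. After a Möbius change of the variable $t$ we may take $P=\{t=\infty\}$, and the ring of functions regular off $P$ is then $k[t]$. Thus $A=k[t]=k^{[1]}$.

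The step I consider the main obstacle is Step 2: identifying $A$ precisely with the coordinate ring of $\mathbb P^1\setminus\{P\}$ without assuming a priori that $A$ is finitely generated. I would handle it through the valuation-theoretic description of normal one-dimensional domains, combined with the absence of nonconstant units inherited from $k[s]$.
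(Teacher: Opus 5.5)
The paper does not prove this statement; it quotes it as well known from Abhyankar--Eakin--Heinzer (their 2.6), so there is no internal proof to compare with. Your argument is a correct, self-contained proof, modulo two small repairs below.

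The reduction to $A\subseteq k[s]$ by a Kronecker substitution is a good choice, for three reasons. It works verbatim over finite fields, where arguments that pick a general line need care. It needs only the classical one-variable L\"uroth theorem; a direct argument inside $k(X_1,\dots,X_n)$ would need the generalized L\"uroth theorem for transcendence-degree-one subfields. It makes rationality of the point at infinity immediate, since $\kappa(\infty_s)=k$. By contrast, with the degree valuation on $k(X_1,\dots,X_n)$ you would have to show that the residue field of its restriction to $K$ is algebraic over $k$ inside $k(X_1/X_n,\dots,X_{n-1}/X_n)$, and hence equal to $k$.

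\textbf{First repair.} In Step 2, the inference ``one-dimensional and normal with fraction field $k(t)$, hence Krull'' is not valid as stated. Non-discrete rank-one valuation rings are one-dimensional and normal but not Krull. The conclusion happens to be true here: $A$ is an overring of the integral closure of $k[a]$ in $K$, which is Dedekind. But you do not need it.
\begin{itemize}
\item A normal domain is the intersection of the valuation rings of its fraction field that contain it.
\item Each such ring other than $k(t)$ contains $k$, so it is $\mathcal{O}_R$ for a place $R$ of $k(t)/k$.
\item Hence $A=\bigcap_{R\in T}\mathcal{O}_R$, where $T$ is the set of places with $\mathcal{O}_R\supseteq A$. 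No finiteness input is used.
\end{itemize}
Once you have $S=\{P\}$, $A$ is by definition the full ring of functions regular off $P$. So the obstacle you flag (that $A$ might be a proper subring of this ring) does not arise.

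\textbf{Second repair.} $S\neq\emptyset$ holds because $A\neq k$, not because $A\neq K$. If $S$ were empty, $A$ would be the intersection of all $\mathcal{O}_R$, which is $k$. (If you read ``one-dimensional'' as transcendence degree one, Step 1 still works: a nonzero kernel would make $\sigma(A)\cong A/\mathfrak p$ algebraic over $k$.)

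The remaining steps are sound:
\begin{itemize}
\item injectivity of $\sigma|_A$;
\item $A^*=k^*$ ruling out two places outside $T$, via the principal divisor $\deg(Q)P-\deg(P)Q$;
\item $\infty_s$ lying over $P$ because it is a pole of a nonconstant $a\in A\subseteq k[s]$, which forces $\kappa(P)\hookrightarrow k$ and hence $A=k[t]$ after a M\"obius change of variable.
\end{itemize}
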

   
   The following strong invariance property of one-dimensional affine domains is due to Abhyankar, Eakin and Heinzer \cite[Theorem 3.3]{AEH}. 
   \begin{thm}\label{strong_invariance}
   	Let $k$ be any field. Let $A$, $B$ be k-domains of transcendence degree 1 over $k$ such that \[\phi:A[t_1,t_2,..,t_n]\longrightarrow B[z_1,z_2,...,z_n]\] is an isomorphism for some $n\geq 1$, where $t_1,...,t_n$ are algebraically independent over $A$ and $z_1,...,z_n$ are algebraically independent over $B$. Then $\phi(A)=B$ or $A\cong_{k}B\cong_{k} k^{[1]}$.
   \end{thm}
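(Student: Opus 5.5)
The plan is to identify $A[t_1,\dots,t_n]$ with $B[z_1,\dots,z_n]$ via $\phi$, write $R$ for this common ring, and regard $A$ and $B$ as two subrings of $R$ of transcendence degree $1$ over $k$. It suffices to show that either $A\subseteq B$ and $B\subseteq A$ (which gives $\phi(A)=B$), or else both rings are $k^{[1]}$.

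\emph{Step 1 (inclusion gives equality).} Both $A$ and $B$ are algebraically closed in $R$ and factorially closed in $R$. Indeed, a nonzero element of $B[z]$ that is algebraic over $B$ must have $z$-degree $0$, by the degree additivity used for exponential maps in the Preliminaries; the same holds for $A$. Now suppose $A\subseteq B$. Since $\operatorname{tr.deg}_k A=\operatorname{tr.deg}_k B=1$, the ring $B$ is algebraic over $A$, hence $B\subseteq A$ because $A$ is algebraically closed in $R$. So $A=B$. The symmetric argument applies if $B\subseteq A$. It therefore remains to treat the case $A\not\subseteq B$; by symmetry we then also have $B\not\subseteq A$.

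\emph{Step 2 (the case $A\not\subseteq B$).} Choose $a\in A\setminus B$. Then $a$ has positive degree in some $z_j$, so the $B$-linear derivation $\partial/\partial z_j$, or in positive characteristic a suitable Hasse derivative, does not vanish on $A$. Let $\sigma:R\to B$ be the retraction $z_i\mapsto 0$, and let $\sigma_\lambda$ be the specialization $z\mapsto\lambda$ for $\lambda\in\bar k^{\,n}$. I would first show that for generic $\lambda$ the composite $A\to R\to B\otimes_k\bar k$ is injective. This should follow because $A\cap B=k'$ is algebraic over $k$: otherwise $A$ and $B$ would be algebraic over each other inside $R$, contradicting Step 1. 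Symmetrically, $B$ embeds into $A\otimes_k\bar k$.

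The key conclusion I aim for is that $A\otimes_k\bar k$ embeds into $\bar k[s]$, using that $a$ is a nonconstant polynomial in the $z$'s with coefficients in $B$. Concretely, I would restrict to a line $z=\lambda+\mu s$ and then apply the retraction $R\to A$, $t\mapsto 0$, to $B$. The resulting curve is rational with a single place at infinity, so its normalization is $\bar k^{[1]}$ over $\bar k$.

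\emph{Step 3 (descent to $k$).} The normalization $\tilde A$ of $A$ is a one-dimensional normal subalgebra of a polynomial ring, so Theorem \ref{AEH} should give $\tilde A=k^{[1]}$, after first checking that $k$ is algebraically closed in $A$. For that check I would use that $A$ is a retract of $R$ and that $B\not\subseteq A$. To go from $\tilde A$ to $A$ itself, I would use that $A$ is a retract of $R=A[t]$ and that $R$ contains the polynomial curve $B$ mapping onto it. The idea is to produce an element $x\in A$ with $A=k[x]$ as follows: take a nonconstant element of $B$, map it to $A$ by the retraction $t\mapsto 0$, and compare degrees with the generator of $\tilde A$. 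The same argument applies to $B$.

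The main obstacle I expect is Step 3. The geometric conclusion over $\bar k$, that the curve is a rational curve with one place at infinity, is fairly standard. Getting exactly $A=k^{[1]}$, rather than merely a nontrivial $\mathbb{A}^1$-form or a non-normal subring of $k^{[1]}$, requires exploiting the retraction structure of $R$ carefully. I would attack it by showing that the retraction maps a degree-one element of $B$ onto a generator of $A$.
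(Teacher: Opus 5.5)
The paper gives no proof of this statement; it quotes Abhyankar--Eakin--Heinzer, Theorem 3.3. So your argument has to stand on its own. Step 1 is correct: $A$ and $B$ are algebraically closed in $R$, an inclusion forces equality, and $A\cap B$ is algebraic over $k$. Steps 2--3, which carry all the content, have a genuine gap.

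\emph{Step 2 fails.} The injectivity claim is false in exactly the case you treat. Take $A=k[x]$, $B=k[t]$, $R=k[x,t]$ and $z=x$. Then every specialization $z\mapsto\lambda$ sends $A$ into $\overline{k}$. Knowing that $A\cap B$ is algebraic over $k$ says nothing about such specializations. Moreover, ``restrict to a line, then apply $t\mapsto 0$ to $B$'' does not define any map $A\otimes_k\overline{k}\to\overline{k}[s]$.

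\emph{Step 3 cannot be completed as planned.} Even an embedding $A\otimes_k\overline{k}\hookrightarrow\overline{k}[s]$ would not suffice, because information over $\overline{k}$ does not descend. The ring $k[X^p,t+X+uX^p]$ of Example~\ref{ex 2} is normal, has $k$ algebraically closed in it, and becomes $\overline{k}^{[1]}$ after base change, yet it is not $k^{[1]}$. Theorem~\ref{AEH} needs an embedding into a polynomial ring over $k$ itself, and you have none: $R=B[z]$ is a polynomial ring over $B$, not over a field. Finally, the planned check that $k$ is algebraically closed in $A$ is impossible in general. For $k=\mathbb{R}$, $A=\mathbb{C}[x]$, $B=\mathbb{C}[t]$, $R=\mathbb{C}[x,t]$ we have $A\neq B$ and $A\not\cong_{\mathbb{R}}\mathbb{R}^{[1]}$. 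So the statement as transcribed needs that hypothesis. In general the conclusion is $A\cong_k B\cong_k K^{[1]}$, with $K$ the algebraic closure of $k$ in $A$. This is harmless for the paper, since $K=k$ for $\mathbb{A}^1$-forms.

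What is missing is a mechanism, defined over $k$ inside $R$, that forces a polynomial structure when $A\neq B$. The paper's own tools supply one, at least for affine $A,B$.
\begin{itemize}
\item If $A$ is non-rigid, take a nontrivial exponential map $\phi$. Then $A^{\phi}$ is algebraically closed in $A$ and properly contained in it, so it has transcendence degree $0$ and equals the field $K$. After the replacement in Lemma~\ref{local_slice_thm} you get $x$ with $\phi(x)=x+\pi U$ and $\pi\in K^{*}$, so $A=A_\pi=K[x]$.
\item If $A$ is rigid and $n=1$, Theorem~\ref{rigidity_thm} gives ${\rm ML}(R)=A$. The translation $z\mapsto z+U$ is an exponential map of $B[z]$ with invariant ring $B$, so ${\rm ML}(R)\subseteq B$. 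Hence $A\subseteq B$, and your Step 1 gives $A=B$.
\end{itemize}
Apply the same dichotomy to $B$. Both $A$ and $B$ contain the algebraic closure of $k$ in $R$, so if both are non-rigid they are $K^{[1]}$ for the same $K$. This proves the corrected statement for $n=1$. For $n\geq 2$ the same argument works once ${\rm ML}(A^{[n]})=A$ is available for rigid $A$; the translations $z_j\mapsto z_j+U$ then give ${\rm ML}(R)\subseteq B$.
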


      The following lemma follows from \cite[Theorem 3.1]{crachiola_fac} and Theorem \ref{AEH}.
   \begin{lem}\label{ker_exp_plane}
   	For an algebraically closed field $k$, the ring of invariants of any nontrivial exponential map $\phi$ on $k[X,Y](=k^{[2]})$ is $k[f]=k^{[1]}$ for some $f\in k[X,Y]$ such that $k[X,Y]=k[f]^{[1]}$.
   \end{lem}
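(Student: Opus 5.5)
The plan is to split the statement into two parts. First, identify $A^{\phi}$ as a polynomial ring $k[f]$ using Theorem \ref{AEH}. Second, show that this $f$ is a variable of $k[X,Y]$; for this I would use \cite[Theorem 3.1]{crachiola_fac}, or failing that a fibrewise argument. Throughout, write $A=k[X,Y]$.

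\textbf{Step 1: $A^{\phi}$ has transcendence degree $1$.} By Lemma \ref{local_slice_thm} there is an exponential map $\psi$ with $A^{\psi}=A^{\phi}$, an element $x$ with $\psi(x)=x+\pi U$, and $A_\pi=A^{\phi}_\pi[x]\cong (A^{\phi}_\pi)^{[1]}$. Comparing transcendence degrees gives $\operatorname{tr.deg}_k A^{\phi}=2-1=1$. Since $\phi$ is nontrivial, $A^{\phi}\neq A$, so this is consistent.

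\textbf{Step 2: $A^{\phi}=k[f]$.} By the preliminaries, $A^{\phi}$ is inert in $A$, so it is algebraically closed in $A$. Because $A$ is normal, this makes $A^{\phi}$ integrally closed in its own fraction field, so $A^{\phi}$ is a one-dimensional normal $k$-subalgebra of $k^{[2]}$. Theorem \ref{AEH} then gives $A^{\phi}=k[f]$ for some $f\in A$. (Theorem \ref{AEH} as quoted includes the finite generation, so nothing extra is needed here.)

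\textbf{Step 3: $A=k[f]^{[1]}$.} This is the substantive part and the main obstacle. The cleanest route is to invoke \cite[Theorem 3.1]{crachiola_fac}, which describes the invariant rings of exponential maps on $k^{[2]}$ for $k$ algebraically closed, and to read off that $f$ is a coordinate. If instead one has to argue directly, I would proceed as follows.

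1. Use inertness of $k[f]$ in $A$: each $f-c$ with $c\in k$ is irreducible in $k[f]$, and every factor in $A$ of an element of $k[f]$ again lies in $k[f]$. Hence $f-c$ is prime in $A$ for every $c\in k$.
2. Consider the fibre $A/(f-c)$. The map $\psi$ induces an exponential map on this fibre, since $(f-c)$ is $\psi$-stable.
3. Write $\pi\in k[f]$ as a product $\prod(f-c_i)$ times a unit; this uses that $k$ is algebraically closed. Argue that each root $c_i$ can be removed by modifying $x$ modulo $(f-c_i)$.
4. Conclude that $\pi$ may be taken to be a unit, so that $A=k[f][x]$.

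The delicate point is step 3 in positive characteristic. There the fibres of a non-variable can still be lines, so merely knowing that $A/(f-c)\cong k^{[1]}$ is not enough. The argument must use the exponential map itself to lower $\deg_f\pi$. This is exactly the content I expect to be supplied by Crachiola's theorem, and I would rely on that citation rather than reprove it.
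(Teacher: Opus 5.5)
Your proposal is correct and follows the paper's route. The paper states only that the lemma follows from \cite[Theorem 3.1]{crachiola_fac} together with Theorem \ref{AEH}; your Steps 1--2 make explicit the checks it leaves implicit (transcendence degree one via the local slice, and normality of $A^{\phi}$ from its algebraic closedness in the normal ring $k[X,Y]$), and Step 3 uses Crachiola's theorem in the same role. Your fibrewise fallback is not needed, and as written its key step (removing the roots of $\pi$ by modifying $x$) is unjustified, so relying on the citation as you propose is the right choice.
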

   
   
   
   
   	
   		
   	

\section{Main Results}

		We first prove Theorem A.
		\begin{thm}\label{strctr_thm_nr_forms}
			Let $k$ be a field and $L$ a field extension of $k$ with algebraic closure $\overline{L}$.
			Let $A$ be a non-rigid $\mathbb{A}^2$-form with respect to $L|_k$. Then $A=B^{[1]}$, where $B$ is an $\mathbb{A}^1$-form with respect to $\overline{L}|_k$.
		\end{thm}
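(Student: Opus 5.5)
Fix a nontrivial exponential map $\phi$ on $A$ and set $B:=A^{\phi}$. The plan is to show $A=B^{[1]}$ and that $B$ becomes $\overline{L}^{[1]}$ after base change to $\overline{L}$.

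First, I base change to $\overline{A}:=A\otimes_k\overline{L}$, which is $\overline{L}^{[2]}$ since $A\otimes_k L=L^{[2]}$. The map $\phi$ extends to an exponential map $\overline{\phi}$ on $\overline{A}$. Since $\overline{L}$ is flat over $k$ and the invariant ring is the kernel of the $k$-linear map $\phi-\iota$ (with $\iota$ the inclusion $A\hookrightarrow A[U]$), we get $\overline{A}^{\overline{\phi}}=B\otimes_k\overline{L}$. The extension $\overline{\phi}$ is nontrivial. Lemma \ref{ker_exp_plane} then gives $B\otimes_k\overline{L}=\overline{L}[f]$ and $\overline{A}=\overline{L}[f]^{[1]}$ for some $f$. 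So $B$ is an $\mathbb{A}^1$-form with respect to $\overline{L}|_k$. It remains to see that $B$ is an affine $k$-domain; I would argue this separately, e.g.\ from $B$ being algebraically closed in the affine domain $A$ with transcendence degree $1$, or by descent of finite generation.

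The main step is proving $A=B[x]$ for a suitable $x\in A$. By Lemma \ref{local_slice_thm}, after replacing $\phi$ by $\psi$ (same invariant ring $B$), there is a local slice $x$ with $\psi(x)=x+\pi U$, $\pi\in B\setminus\{0\}$, and $A_\pi=B_\pi[x]$. I would choose $x$ so that $\pi$ generates the "plinth ideal", i.e.\ so that $\deg_B \pi$ is minimal; this is measured via $\overline{L}[f]$. After base change, $\overline{\psi}$ is an exponential map on $\overline{L}[f][y]=\overline{L}[f]^{[1]}$ with invariant ring $\overline{L}[f]$. Such a map has the form $y\mapsto y+h(f,U)$ with $h$ a $p$-polynomial in $U$ over $\overline{L}[f]$, and the degree-one part of $\overline{\psi}$ has image ideal generated by the coefficient of $U$ in $h$.

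The key claim is that the ideal $I=\{\psi^1(a): \deg_\psi a\le 1\}\subseteq B$ is the unit ideal. Here $\psi^1$ is the $k$-linear map taking the $U$-coefficient, as defined earlier in the paper. Since $\psi^1$ is $k$-linear and compatible with base change, $I\otimes_k\overline{L}$ is the corresponding ideal for $\overline{\psi}$. If that corresponding ideal is the unit ideal, then $I=B$ by faithful flatness. Once $I=B$, there is an $x$ with $\psi(x)=x+U$, i.e.\ a slice, and the standard slice theorem (or Lemma \ref{local_slice_thm} with $\pi=1$) gives $A=B[x]=B^{[1]}$.

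The expected obstacle is that over $\overline{L}$ the degree-one elements need not give a unit. For example, $y\mapsto y+fU+U^p$ has degree-one part $f$ and no slice. To handle this, I would modify $\psi$ rather than insist on the given one. Over $\overline{L}$, $\overline{A}=\overline{L}[f][y]$ admits the exponential map $y\mapsto y+U$ with the same invariants. I would try to descend a map with a slice, e.g.\ by composing $\psi$ with a suitable additive polynomial substitution $U\mapsto c\,U^{p^e}$ or by taking the derivation-like component of lowest $p$-power degree. Descent works because the set of degree-one elements is a $B$-module defined over $k$.

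If the descent fails, the fallback is a structural argument. Choose a local slice $x\in A$ and consider $A\subseteq B_\pi[x]$. Using $A\otimes\overline{L}=(B\otimes\overline{L})^{[1]}$ and faithful flatness of $B\to B\otimes\overline{L}$, show $A=B[x']$ for a suitable $x'$ by checking that $A/\mathfrak{p}A$ is a polynomial ring over $B/\mathfrak{p}$ fibrewise. Then invoke the Asanuma/Bass–Connell–Wright-type criterion that a locally polynomial algebra of rank one over $B$ with trivial Picard data is $B^{[1]}$.
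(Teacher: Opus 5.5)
\textbf{There is a genuine gap, and it cannot be filled: the statement as written is false, and the paper's own proof breaks at the same point.}

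Your first steps are sound and essentially match the paper's set-up. These are the reduction to a local slice, the identification $B\otimes_k\overline{L}=\overline{L}[f]$, and the fact that the plinth ideal $I=\{\psi^1(a):\deg_\psi a\le 1\}$ satisfies $I\otimes_k\overline{L}=g\,\overline{L}[f]$, where $\overline{\psi}(Y)=Y+gU$ on $\overline{L}[f][Y]$. Your choice of $x$ with $\deg_f\pi$ minimal is the paper's choice of $F$.

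The gap is the next step. Nothing in your plan shows that $I$ is principal, and that is what is needed. Asking for $I=B$ is too strong: it already fails for $\psi(y)=y+sU$ on $k[s,y]$. If instead $I=\pi B$, then $\pi\in\overline{L}^*g$, and $\psi^1(F)=\pi$ gives $A=B[F]$.

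The $U^p$-obstruction you describe does not arise once $\psi$ has a local slice. The real obstruction is different: the minimum of $\deg_f$ on $I\setminus\{0\}$ can exceed $\deg_f g$, because $g$ is an $\overline{L}$-combination of elements of $I$ whose leading coefficients cancel. ``Descending a map with a slice'' presupposes an element of $I$ that generates $I\otimes_k\overline{L}$. The ``trivial Picard data'' in your fallback is exactly the missing claim.

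What faithfully flat descent does give is the following. $I$ is an invertible ideal of $B$. The set $\{a:\deg_\psi a\le1\}$ equals $B\oplus P$ with $P\cong I$. And $A\cong\mathrm{Sym}_B(P)$. So $A=B[F]$ for some $F$ if and only if $[I]=0$ in $\mathrm{Pic}(B)$. This holds, for example, when $A$ (hence $B$) is a U.F.D., which is enough for Theorem C.

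The gap cannot be filled, because the statement fails when $\mathrm{Pic}(B)\neq0$. A counterexample:
\begin{itemize}
\item Let $\mathrm{char}\,k=p>0$, $a\in k\setminus k^p$, $\alpha=a^{1/p}$, and $B=k[x,y]$ with $y^p=x+ax^p$. Let $\mathfrak m=(x,y)$ and $A=B[xT,yT]\subseteq B[T]$.
\item Over $L=k(\alpha)$, put $t=y-\alpha x$. Then $B_L=L[t]$, $x=t^p$, $\mathfrak mB_L=tB_L$, and $A_L=L[t,tT]=L^{[2]}$.
\item The map $T\mapsto T+U$ restricts to a nontrivial exponential map on $A$, with invariant ring $B$ and plinth ideal $\mathfrak m$.
\item We have $B=\bigoplus_{j<p}k[x]y^j$, and $c(x)y^j$ has $t$-degree $p(\deg c+j)$ with leading coefficient in $k^*\alpha^j$. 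So every nonzero element of $B$ has $t$-degree divisible by $p$. Hence $\mathfrak m$ is not principal and $B$ is not a U.F.D.
\end{itemize}
Now suppose $A=B'[S]$ with $S$ transcendental over a subring $B'$.
\begin{itemize}
\item If $B'=B$, localizing at the maximal ideals of $B$ forces $S=r+sT$ with $r\in B$ and $sB=\mathfrak m$. This is impossible.
\item If $B'\neq B$, then $\mathrm{ML}(A)\subseteq B\cap B'=k$, since both are algebraically closed in $A$ of transcendence degree one. The Crachiola--Makar-Limanov rigidity theorem then makes $B'$ nonrigid. Since $k$ is algebraically closed in $A$, this gives $B'=k^{[1]}$ and $A=k^{[2]}$. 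That contradicts the fact that the factorially closed subring $B$ is not a U.F.D.
\end{itemize}
So $A$ is a nonrigid $\mathbb{A}^2$-form that is not of the form $B'^{[1]}$.

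The paper's proof fails at the same place. Here one may take $Y=tT=yT\otimes1-xT\otimes\alpha$. Then $g=t$ has $X$-degree $1$, while $\phi^1(yT)=y$ and $\phi^1(xT)=x$ have degree $p$. This contradicts the asserted equality $\deg_X(g)=\max_i\deg_X\phi^1(a_i)$.
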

		\begin{proof}
			
			\indent
			Let $\phi:A\longrightarrow A[U]$ be a nontrivial exponential map on $A$ and $B=A^{\phi}$. From Lemma \ref{local_slice_thm}, we may assume that $\phi$ admits a local slice, i.e, the set $S:=\{a\in A| {\rm deg}_{\phi}(a)=1\}$ is non-empty. Replacing $L$ by  $\overline{L}$, we may assume that $L$ is algebraically closed. 
			Set \[\overline{A}:=A\otimes_kL=L^{[2]}  \text{  and  } \overline{B}:=B\otimes_kL. \] We can extend $\phi$ to an exponential map
			
			 \[\phibar=\phi\otimes_{k}{\rm id}_{L} : \Abar \longrightarrow \Abar[U] \;\;{\text{	with $\overline{A}^{\phibar}= \overline{B}$ }}.\]  By Lemma \ref{ker_exp_plane}, we get $X\in \overline{B}$ and $Y\in \overline{A}$ such that $\overline{B}=L[X]$ and $\Abar= L[X][Y]=L^{[2]}$. Let $F\in S$ be such that the $X$-degree of $\phi^1(F) $ is the least among the elements of $\{\phi^1(a)|a\in S\}\subset B$. Let $\phi(F)=F+\pi U$ for some $\pi\in B$. 
			
			From Lemma \ref{local_slice_thm}, we have 
				\begin{equation}
				\overline{B}_{\pi}[F]=\overline{A}_{\pi}.
			\end{equation}
		
			Also from the relation  $\Abar= L[X][Y]$, we get
			\begin{equation}
				\overline{B}_{\pi}[Y ]=\overline{A}_{\pi}.
			\end{equation} 
		Thus, it follows from equations (1) and (2) that $F=\lambda Y+\mu$ where $\lambda\in (\overline{B}_{\pi})^*$ and $\mu \in \overline{B}_{\pi}$.
			As $F\in L[X,Y]$ we have $\lambda, \mu \in \overline{B}=L[X]$. Let $\phibar^1(Y)=g\in L[X]$.
			
			 
			  Applying $\phibar$ on both sides of the relation $F=\lambda Y+\mu$ we get 
			\begin{equation}
				\pi=\lambda g.
			\end{equation}
			
			As $Y\in \overline{A}$, there exist $k$-linearly independent elements $l_1,l_2,...,l_n\in L$ such that
			\begin{equation}
				Y=a_1\otimes_{k}l_1+a_2\otimes_{k}l_2+...+a_n\otimes_{k}l_n 
			\end{equation}
			for some $a_1,a_2,...,a_n\in A$. Comparing the $\phibar$-degrees on both sides of the equation (4) and using the fact that $l_1,l_2,...,l_n\in L$ are $k$-linearly independent, we get ${\rm deg}_{\phi}(a_i)\leq 1$ for all $i, 1\leq i\leq n$ and ${\rm deg}_{\phi}{a_i}=1$ for at least one $i, 1\leq i\leq n$. Thus applying $\phibar$ on both sides of equation (4), we get
			\begin{equation}
				g= \phi^1(a_1)\otimes_{k}l_1+\phi^1(a_2)\otimes_{k}l_2+...+\phi^1(a_n)\otimes_{k}l_n.
			\end{equation}
			As $l_1,l_2,...,l_n\in L$ are $k$-linearly independent, from equation (5), we get $${\rm deg}_X(g)={\rm max}\{{\rm deg}_X(a_i)|1\leq i\leq n\}\geq {\rm deg}_X(\pi)\text{ from the choice of $F$}.$$
			 Hence, from equation (3), we get $\lambda\in L^*$. Thus, $B[F]\otimes_kL=A\otimes_kL$ and as $L$ is faithfully flat over $k$, we get $A=B[F]$. This completes the proof.
		 
		   
		   
		   
		
     	
	   
	   
		\end{proof}

	\begin{cor}\label{form_inside_poly_ring}
		Let $k$ be a field and $A$ be a non-rigid $\mathbb{A}^2$-form with respect to some field extension $L|_k$. If $A\subset k^{[n]}$ for some $n\geq 2$, then $A=k^{[2]}$. 
	\end{cor}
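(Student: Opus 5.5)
By Theorem \ref{strctr_thm_nr_forms}, $A=B^{[1]}$, where $B=A^{\phi}$ for a nontrivial exponential map $\phi$ on $A$ and $B$ is an $\mathbb{A}^1$-form with respect to $\overline{L}|_k$. So it suffices to show $B=k^{[1]}$; then $A=k^{[1]}[F]=k^{[2]}$. The plan is to apply Theorem \ref{AEH} to $B$, viewed as a $k$-subalgebra of the polynomial ring $k^{[n]}$ through the inclusions $B\subset A\subset k^{[n]}$.

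First, $B$ is one dimensional: $B\otimes_k\overline{L}=\overline{L}^{[1]}$, so $B$ has transcendence degree $1$ over $k$. Equivalently, $B$ is the invariant ring of a nontrivial exponential map on a two dimensional domain.

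Second, $B$ is normal. Since $\overline{B}:=B\otimes_k\overline{L}=\overline{L}^{[1]}$ is normal and $B\to\overline{B}$ is faithfully flat, normality descends: an element of ${\rm Frac}(B)$ integral over $B$ lies in $\overline{B}\cap{\rm Frac}(B)$. This intersection equals $B$ by the usual faithfully flat argument, since the conductor-type ideal $(B:_B x)$ extends to the unit ideal in $\overline{B}$. Alternatively, normality is immediate from the preliminaries: $B=A^{\phi}$ is algebraically closed in $A$, and $A$ is normal because $A\otimes_k L=L^{[2]}$ is normal and normality descends under the faithfully flat extension. An element of ${\rm Frac}(B)$ integral over $B$ is integral over $A$, hence lies in $A$, and being algebraic over $B$ it lies in $B$.

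With $B$ a one dimensional normal $k$-subalgebra of $k^{[n]}$, Theorem \ref{AEH} gives $B=k^{[1]}$, and hence $A=B^{[1]}=k^{[2]}$.

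The only delicate point is the descent of normality (or integral closedness) from $A\otimes_kL$ to $A$. I would handle it by the standard fact that if $R\to S$ is faithfully flat, then $S\cap{\rm Frac}(R)=R$ inside ${\rm Frac}(S)$. This needs $A\otimes_k L$ to be a domain with $A$ embedded in it, which holds because $A\otimes_k L=L^{[2]}$. Everything else is a direct citation.
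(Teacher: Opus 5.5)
Your proposal is correct and follows the paper's proof: use Theorem \ref{strctr_thm_nr_forms} to write $A=B^{[1]}$, then apply Theorem \ref{AEH} to $B\subseteq A\subseteq k^{[n]}$. Your extra check that $B$ is one-dimensional and normal is sound (either descent argument works), and it usefully fills in a hypothesis of Theorem \ref{AEH} that the paper uses without comment.
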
	
	\begin{proof}
	 By Theorem \ref{strctr_thm_nr_forms}, $A=B^{[1]}$, where $B$ is an $\mathbb{A}^1$-form with respect to $\overline{L}|_k$. Now as $B\subseteq A\subseteq k^{[n]}$, by Theorem \ref{AEH}, we get $B=k^{[1]}$. Hence $A=k^{[2]}$. 
	\end{proof}

	\begin{cor}\label{ML_trivial_form_trivial}
		Let $A$ be an $\mathbb{A}^2$-form over $k$. Then the following are equivalent:
		\begin{enumerate}
			\item[\rm(i)] {\rm ML}$(A)$=$k$.
			\item[\rm(ii)] $A=k^{[2]}$.
		\end{enumerate}
	\end{cor}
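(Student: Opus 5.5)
The direction (ii)$\Rightarrow$(i) is the standard fact ${\rm ML}(k^{[2]})=k$, and I would prove it directly. Write $A=k[x,y]$ and take the two exponential maps $\phi_1:x\mapsto x+U,\ y\mapsto y$ and $\phi_2:x\mapsto x,\ y\mapsto y+U$. Their rings of invariants are $k[y]$ and $k[x]$ respectively. Hence ${\rm ML}(A)\subseteq k[x]\cap k[y]=k$. The reverse inclusion $k\subseteq{\rm ML}(A)$ holds because every exponential map is a $k$-algebra homomorphism.

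For (i)$\Rightarrow$(ii), the plan is to use Theorem \ref{strctr_thm_nr_forms} twice.

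\emph{Step 1.} If ${\rm ML}(A)=k$, then $A$ cannot be rigid. Indeed, rigidity would give ${\rm ML}(A)=A$, but $A$ has transcendence degree $2$ over $k$, so $A\neq k$.

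\emph{Step 2.} Pick a nontrivial exponential map $\phi$. By Theorem \ref{strctr_thm_nr_forms}, $A=B[F]$ with $B=A^{\phi}$ an $\mathbb{A}^1$-form with respect to $\overline{L}|_k$.

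\emph{Step 3.} Since ${\rm ML}(A)=k\neq B$, there is a second exponential map $\psi$ with $A^{\psi}\neq B$. Apply Theorem \ref{strctr_thm_nr_forms} to $\psi$ to get $A=C[G]$, where $C=A^{\psi}$ is also an $\mathbb{A}^1$-form.

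\emph{Step 4.} Now $B^{[1]}\cong A\cong C^{[1]}$, with $B$ and $C$ domains of transcendence degree $1$ over $k$. By Theorem \ref{strong_invariance}, applied to the identity isomorphism $B[F]=C[G]$, either $B=C$ as subrings of $A$, or $B\cong_k C\cong_k k^{[1]}$. The first alternative is excluded by the choice of $\psi$. Therefore $B=k^{[1]}$ and $A=B[F]=k^{[2]}$.

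The main point requiring care is Step 3, namely guaranteeing a $\psi$ whose invariant ring differs from $B$. This follows directly from the definition of ${\rm ML}$: if every exponential map had invariant ring equal to $B$ (trivial maps have invariant ring $A\supseteq B$), then ${\rm ML}(A)=B$. But $B$ has transcendence degree $1$, so $B\neq k$, a contradiction.

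The one subtlety is that Theorem \ref{strong_invariance} must be applied inside $A$, with the identity map, so that its conclusion ``$\phi(A)=B$'' reads literally as the equality of subrings $B=C$. That is exactly the alternative we have excluded.
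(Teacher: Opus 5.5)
Your proof is correct, but for (i)$\Rightarrow$(ii) it takes a different route from the paper.

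\textbf{The paper's route.} The paper uses Theorem~\ref{strctr_thm_nr_forms} only once, and only as the bare statement $A=B^{[1]}$. It then studies $B$ itself. If $B$ were rigid, the rigidity theorem (Theorem~\ref{rigidity_thm}) would give ${\rm ML}(A)=B\neq k$. So $B$ is a non-rigid one-dimensional affine domain in which $k$ is algebraically closed, since ${\rm ML}(A)=k$ is factorially closed in $A$. Crachiola--Makar-Limanov's classification of such curves (their Lemma 2.2(d)) then gives $B=k^{[1]}$.

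\textbf{Your route.} You take two nontrivial exponential maps with distinct invariant rings, write $A=B[F]=C[G]$, and let Theorem~\ref{strong_invariance} force $B\cong k^{[1]}$. This avoids the rigidity theorem and the curve classification. It also shows directly that two distinct kernels already force $A$ to be trivial.

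\textbf{What your route requires.} You need more than the statement of Theorem~\ref{strctr_thm_nr_forms}. You need the coefficient ring to be $A^{\phi}$ for an \emph{arbitrary} prescribed nontrivial $\phi$, and likewise $A^{\psi}$ for your second map. The proof of that theorem does give this: it starts from an arbitrary nontrivial $\phi$, sets $B=A^{\phi}$, and Lemma~\ref{local_slice_thm} does not change the invariant ring. So the step is sound, but you should say you are using the proof rather than the statement.

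\textbf{A precision on Abhyankar--Eakin--Heinzer.} The alternative in their theorem is really $B\cong K^{[1]}$ for some field $K\supseteq k$ algebraic over $k$, not literally $k^{[1]}$. For example, over $k=\mathbb{R}$ one has $\mathbb{C}[t][X]=\mathbb{C}[X][t]$ with $\mathbb{C}[t]\neq\mathbb{C}[X]$ and $\mathbb{C}[t]\ncong_{\mathbb{R}}\mathbb{R}^{[1]}$. In your situation $K=k$, because $k$ is algebraically closed in $A$ (for instance because ${\rm ML}(A)=k$ is factorially closed). So your conclusion $B=k^{[1]}$, and hence $A=k^{[2]}$, stands.
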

	\begin{proof}
		$(ii)\Rightarrow(i):$ This is trivial.
		\indent
		
		$(i)\Rightarrow(ii):$
		As ${\rm ML}(A)=k$, $A$ is nonrigid. By Theorem \ref{strctr_thm_nr_forms}, $A=B^{[1]}$ for some k-subalgebra $B$ of $A$. If $B$ were rigid, then, by Theorem \ref{rigidity_thm}, ${\rm ML}(A)=B\neq k$. Thus $B$ is also nonrigid and as $k={\rm ML}(A)$ is inert in $A$, it is algebraically closed in $A$, it follows from \cite[Lemma 2.2 (d)]{crachiola_makar_limanov} that $B=k^{[1]}$. Hence $A=k^{[2]}$. 
	\end{proof}
	
	\begin{cor}\label{struc_using_asanuma_and_mine}
		Let $k$ be a field of characteristic $p>2$. Let $A$ be a non-rigid $\mathbb{A}^2$-form with respect to $\overline{k}|_k$, where $\overline{k}$ is the algebraic closure of $k$. Then there exist indeterminates $X,Y$ over $L$ such that $A\otimes_kL=L[X,Y]$ and
		\[A=k[X^{p^e},\alpha^2\beta, \alpha\beta^{\lambda}][Y]\] for some $e\geq0$ and $\alpha,\beta \in k^{1/p^e}[X]$ and $\lambda=(p^e+1)/2$.
	\end{cor}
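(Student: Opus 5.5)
The plan is to combine Theorem \ref{strctr_thm_nr_forms} with Asanuma's classification of $\mathbb{A}^1$-forms in odd characteristic. First, Theorem \ref{strctr_thm_nr_forms} applied with $L=\overline{k}$ gives $A=B[Y]$ with $Y$ transcendental over $B$. Here $B=A^{\phi}$ for a nontrivial exponential map $\phi$, and $B$ is an $\mathbb{A}^1$-form with respect to $\overline{k}|_k$, since $\overline{\overline{k}}=\overline{k}$. Its proof also shows that $\overline{B}=B\otimes_k\overline{k}=\overline{k}[X]$ for some $X$, by Lemma \ref{ker_exp_plane}. Hence $A\otimes_k\overline{k}=\overline{k}[X][Y]$, with $Y$ coming from $A$ itself (the element $F$ of the proof). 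This supplies the indeterminates $X,Y$ over $L=\overline{k}$ required in the statement.

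Second, we must describe $B$. Since $B\otimes_k\overline{k}=\overline{k}^{[1]}$, the form $B$ already trivializes over a finite subextension, and we may replace it by a purely inseparable one. The separable part causes no trouble because of Kambayashi's result for separable forms: one first descends $B$ to the separable closure and uses that separable $\mathbb{A}^1$-forms are trivial. So there is some $e\geq 0$ with $B\otimes_k k^{1/p^e}=k^{1/p^e}[X]$. We then invoke Asanuma's structure theorem for $\mathbb{A}^1$-forms in characteristic $p>2$ \cite[Theorem 4.8]{aasnuma_forms} (this is the entry in the table). It asserts that such a $B$ equals $k[X^{p^e},\alpha^2\beta,\alpha\beta^{\lambda}]$ inside $k^{1/p^e}[X]$, for suitable $\alpha,\beta\in k^{1/p^e}[X]$ and $\lambda=(p^e+1)/2$. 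Substituting into $A=B[Y]$ gives the displayed formula.

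The main obstacle is the bookkeeping between the coordinate $X$ from Theorem \ref{strctr_thm_nr_forms} and the coordinate in Asanuma's normal form. One must check that Asanuma's coordinate can be taken to be the same $X$, or at least a coordinate of $\overline{B}$. Since any two generators of $\overline{k}[X]$ differ by an affine change $X\mapsto aX+b$, we would first adjust the choice of $X$. Then we use $Y$, which lies in $A$, as the second variable, so that $A\otimes_k\overline{k}=\overline{k}[X,Y]$ still holds. The remaining checks are routine: that $X^{p^e}\in B$, and that the algebra generated by the stated elements is exactly $B$. Both follow directly once Asanuma's theorem is quoted in the correct form.
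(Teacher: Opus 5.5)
Your proposal is correct and follows the paper's proof, which is just Theorem~\ref{strctr_thm_nr_forms} (giving $A=B[F]$ with $F\in A$ and $B$ an $\mathbb{A}^1$-form) combined with Asanuma's classification of $\mathbb{A}^1$-forms in characteristic $p>2$; the coordinate bookkeeping you flag is not a real obstacle, since you may simply take Asanuma's $X$ and set $Y=F$. Two small corrections: the classification you need is \cite[Theorem 8.1]{aasnuma_forms}, whereas Theorem 4.8 is the result behind Theorem~\ref{asanuma_u.f.d_retract}; and the reduction to a purely inseparable trivializing field is done by base changing to $k^{1/p^e}$ for $e\gg 0$ (or to the perfect closure of $k$), over which the trivializing extension becomes separable so that separable $\mathbb{A}^1$-forms being trivial applies, not by passing to the separable closure.
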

	
	\begin{proof}
		Follows from Theorem \ref{strctr_thm_nr_forms} and \cite[Theorem 8.1]{aasnuma_forms}.
	\end{proof}

		\indent 
	As an application of Theorem \ref{strctr_thm_nr_forms}, we obtain Theorem B which proves that $\mathbb{A}^2$-forms are cancellelative. We note the result gives an alternative proof of Theorem \ref{zct} in \cite{bhatwadekar_gupta}.  
	
	\begin{thm}\label{callelation of A^2-forms}
		Let $A$ be an  $\mathbb{A}^2$-form over a field $k$. Let $B$ be a $k$-algebra such that $A^{[1]}\cong_{k}B^{[1]}$. Then $A\cong_{k}B$.
	\end{thm}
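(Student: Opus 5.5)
The plan is to split into two cases according to whether $A$ is rigid, using Theorem \ref{strctr_thm_nr_forms} in the non-rigid case and the rigidity theorem (Theorem \ref{rigidity_thm}) in the rigid case. Fix a $k$-isomorphism $\theta: A[T]\longrightarrow B[S]$. First we note that $B$ is an affine domain of dimension $2$: it is a retract of $B[S]\cong A^{[1]}$, hence finitely generated and a domain, and comparing transcendence degrees gives ${\rm tr.deg}_k B = 2$. We also note that $B$ is an $\mathbb{A}^2$-form with respect to $\overline{L}|_k$, via Theorem \ref{zct}: after base change, $(B\otimes_k \overline{L})^{[1]} \cong \overline{L}^{[3]}$, so $B\otimes_k\overline{L}=\overline{L}^{[2]}$.

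\emph{Case 1: $A$ is rigid.} By Theorem \ref{rigidity_thm}, ${\rm ML}(A[T])=A$. We would like to compare this with ${\rm ML}(B[S])$, which is $\theta(A)$. If $B$ is rigid, Theorem \ref{rigidity_thm} gives ${\rm ML}(B[S])=B$, so $\theta(A)=B$ and we are done.

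If instead $B$ is non-rigid, then $B$ is an $\mathbb{A}^2$-form, so Theorem \ref{strctr_thm_nr_forms} gives $B=C^{[1]}$ with $C$ a one-dimensional $\mathbb{A}^1$-form. Then $B[S]=C^{[2]}$, and every element of $C$ is invariant under exponential maps on $C[X,S]$ only if $C$ is rigid. Otherwise ${\rm ML}(B[S])\subseteq C$ has dimension at most $1$, which contradicts $\dim \theta(A)=2$. So this subcase cannot occur.

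\emph{Case 2: $A$ is non-rigid.} By Case 1 with the roles of $A$ and $B$ swapped, $B$ cannot be rigid either: rigid $B$ would force $A\cong B$ rigid. So both are non-rigid. Theorem \ref{strctr_thm_nr_forms} then gives $A=C^{[1]}$ and $B=D^{[1]}$ with $C,D$ one-dimensional $\mathbb{A}^1$-forms, and hence $C^{[2]}\cong_k D^{[2]}$. Theorem \ref{strong_invariance} now yields either $C\cong_k D$, or $C\cong D\cong k^{[1]}$. In both cases $A=C^{[1]}\cong D^{[1]}=B$.

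The main obstacle is the rigidity bookkeeping in Case 1: showing $\dim{\rm ML}(C^{[2]})\le 1$ and keeping the dimension count consistent. It also requires care that $B$ is an affine domain, so that Theorem \ref{rigidity_thm} and Theorem \ref{strctr_thm_nr_forms} apply. Once that is settled, the non-rigid case reduces cleanly to the Abhyankar--Eakin--Heinzer strong invariance theorem.
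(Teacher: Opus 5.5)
Your proof is correct and follows the paper's approach. You split on whether $A$ is rigid, use Theorem \ref{rigidity_thm} in the rigid case, and in the non-rigid case get $B$ non-rigid by swapping roles, then finish with Theorem \ref{strctr_thm_nr_forms} and Theorem \ref{strong_invariance}.

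The one difference is in the rigid case, where the paper needs no subcase on $B$. The translation $Y\mapsto Y+U$ shows ${\rm ML}(B[Y])\subseteq B$ always, so $A={\rm ML}(B[Y])\subseteq B$; equal transcendence degree and $A$ being algebraically closed in $A[X]=B[Y]$ then force $A=B$. This also removes the need for your (unconditionally true) fact ${\rm ML}(C[X,S])\subseteq C[X]\cap C[S]=C$.
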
 
	
	\begin{proof}
		Let $\varphi: A[X] \longrightarrow B[Y]$ be the given isomorphism, where $X,Y$ are indeterminates over $A$ and $B$ respectively. 
		Identifying $A[X]$ with its image in $B[Y]$, let us assume that $A[X]=B[Y]$. Without loss of generality we may assume $A$ is an $\mathbb{A}^2$-form with respect to $L|_k$ where $L$ is an algebraically closed field. As $L^{[2]}$ is cancellative (\cite{russell}), it follows that $B$ is also an $\mathbb{A}^2$-form. Suppose that $A$ is a rigid domain. Then, by Theorem \ref{rigidity_thm},
		
		\[{\rm ML}(A[X])=A.\] 
		Hence, \[A={\rm ML}(B[Y])\subseteq B.\] Thus, as ${\rm trdeg}_k(A)={\rm trdeg}_k(B)=2$ and $A$ is algebraically closed in $B[Y]$, we get $A=B$. 
		
		Now suppose that $A$ is nonrigid. Then, by the argument as before, $B$ is also nonrigid. Thus, by Theorem \ref{strctr_thm_nr_forms}, $A=R[U]=R^{[1]}$ and $B=S[V]=S^{[1]}$ for some $\mathbb{A}^1$-forms $R$ and $S$ over $k$.
		As 
		\[R[U,X]=S[V,Y],\]
		by Theorem \ref{strong_invariance}, it follows that \[R\cong_{k}S.\]
		Hence $A\cong_{k}B$.

\end{proof}

		As another application of Theorem \ref{strctr_thm_nr_forms}, we have Theorem C below.
		\begin{thm}\label{mtheo1} \label{factorial_retract_nrigid_trivial}
			Let $A$ be an $\mathbb{A}^{2}$-form over a field $k$ with respect to any field extension $L\mid_k$. Suppose the following hold
			
				\begin{enumerate}
				\smallskip
				\item[{\rm(i)}] $A$ is a U.F.D
				\smallskip
				\item[{\rm(ii)}] $A$ has a $k$-rational point (or equivalently $A$ has a retraction to $k$) and
				\smallskip
				\item[{\rm(iii)}] $A$ is non-rigid.
				
			\end{enumerate}
			 Then $A=k^{[2]}$.
		\end{thm}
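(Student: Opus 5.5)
By hypothesis (iii) and Theorem \ref{strctr_thm_nr_forms}, we have $A=B[F]=B^{[1]}$, where $B=A^{\phi}$ is an $\mathbb{A}^1$-form over $k$ with respect to $\overline{L}|_k$. The plan is to show that $B$ satisfies both hypotheses of Theorem \ref{asanuma_u.f.d_retract}. That theorem then gives $B=k^{[1]}$, and hence $A=k^{[1]}[F]=k^{[2]}$.

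\emph{Factoriality of $B$.} Since $A$ is a U.F.D. and $B=A^{\phi}$ is the ring of invariants of an exponential map, the preliminaries show that $B$ is inert in $A$, and therefore $B$ is a U.F.D. Alternatively, one can argue directly: $A=B^{[1]}$ is factorial, and a ring $R$ with $R^{[1]}$ factorial is itself factorial. The reason is that units of $R[F]$ lie in $R$, and an irreducible element of $R$ remains prime in $R[F]$; the primality then descends to $R$ by intersecting the prime ideal with $R$. Either route gives hypothesis (i) of Theorem \ref{asanuma_u.f.d_retract}.

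\emph{A $k$-rational point on $B$.} Let $\rho:A\to k$ be a $k$-algebra retraction, which exists by hypothesis (ii). Then $\rho|_B:B\to k$ is a $k$-algebra homomorphism. It is surjective because it restricts to the identity on $k\subseteq B$, so it is a retraction of $B$ onto $k$. Its kernel $\ker\rho\cap B$ is a maximal ideal of $B$ with residue field $k$, that is, a $k$-rational point of $B$. This gives hypothesis (ii) of Theorem \ref{asanuma_u.f.d_retract}.

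Applying Theorem \ref{asanuma_u.f.d_retract} to $B$, regarded as an $\mathbb{A}^1$-form with respect to $\overline{L}|_k$, we conclude $B=k^{[1]}$, and so $A=k^{[2]}$.

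I expect no serious obstacle. The only step needing care is verifying that factoriality passes from $A$ to $B$. This is already recorded in the preliminaries as a consequence of inertness, so the proof is essentially a direct combination of Theorem \ref{strctr_thm_nr_forms} and Theorem \ref{asanuma_u.f.d_retract}.
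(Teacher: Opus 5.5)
Your proposal is correct and matches the paper's proof: Theorem \ref{strctr_thm_nr_forms} gives $A=B^{[1]}$ with $B$ an $\mathbb{A}^1$-form, and Theorem \ref{asanuma_u.f.d_retract} then finishes the argument. You also spell out the two checks the paper leaves implicit, namely that $B$ inherits factoriality and a $k$-rational point from $A$, and both are handled correctly.
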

		\begin{proof}
			By Theorem \ref{strctr_thm_nr_forms}, we have that $A=B^{[1]}$ for some $\mathbb{A}^1$-form $B$. The rest follows from Theorem \ref{asanuma_u.f.d_retract}.
		\end{proof}

		The following three examples show that none of the hypotheses in Theorem \ref*{mtheo1} can be discarded.
		 
		\begin{ex}\label{ex 2}
		\rm Let $\mathbb{F}_p$ be the finite field of characteristic $p>0$. Consider the field $k=\mathbb{F}_p(t^p,u^p)$ where $t$ and $u$ are indeterminates over $\mathbb{F}_p$. For indeterminates $X$ and $Y$ over $k$, let us consider $A=k[X^p,t+X+uX^p,Y]$. As $k^{[1]}$ is cancellative (\cite[Corollary 2.8]{AEH}) and $k[X^p,t+X+uX^p]\ncong_k k^{[1]}$ is a U.F.D without a retraction to $k$ \cite[p. 70-71 Remark 6.6(a) example (i)]{kam_miya}, we get $A$ is a nontrivial $\mathbb{A}^2$-form which is a non-rigid U.F.D without a retraction to $k$.  
		
		\end{ex}
	 
		\begin{ex}\label{ex 3}
			\rm
			Let $k$ be a field of characteristic $p\geq 2$ and $A=\frac{k[X,Y,Z]}{(Y^p-X-aX^p)}$, where $a\in k\setminus k^p$. As $k^{[1]}$ is cancellative (\cite[Corollary 2.8]{AEH}) and $\frac{k[X,Y]}{(Y^p-X-aX^p)}\ncong_kk^{[1]}$ is a non-U.F.D with a retraction to $k$ \cite[p. 70-71 Remark 6.6(a) example (ii)]{kam_miya}, we get that $A$ is a non-trivial, nonrigid $\mathbb{A}^{2}$-form over $k$ with a retraction to $k$ but $A$ is not a U.F.D.
		\end{ex} 
		
		The next example shows that the assumption on non-rigidity cannot be discarded.
		\begin{ex}\label{ex 4}
			\rm Let $\mathbb{F}_2$ be the finite field of characteristic $2$ and $\alpha$, $\beta$ be two indeterminate over $\mathbb{F}_2$. Let $L= \mathbb{F}_2(\alpha,\beta)$.  Consider the subfield $k=\mathbb{F}_2({\alpha}^2,{\beta}^2)$ of $L$. Consider the affine $k$-domain \[A=
			\frac{k[U,V,W]}{(W^2+U+{\alpha}^2U^2+{\beta}^2V^2)}.\]
			Let $u,v,w$ respectively denote the images of $U,V,W$ in $A$. We observe that in $L[U,V,W]$, the polynomial $W^2+U+{\alpha}^2U^2+{\beta}^2V^2$ can be written as $(W+\alpha U)^2+U+{\beta}^2V^2$ which is a coordinate in $L[U,V,W]$. Thus $A\otimes_kL=L^{[2]}$. Hence $A$ is an $\mathbb{A}^2$-form over $k$ having a retraction to $k$. Now we show that $A$ is a U.F.D. As $u$ is a prime element of $A$, using Nagata'a criterion for U.F.D, it's enough to prove that $A_u(:= A[1/u])$  is a U.F.D. We have,
			\[\begin{split}
				A_u&=\frac{k[U,1/U,V,W]}{((W/U)^2+1/U+{\alpha}^2+{\beta}^2(V/U)^2)}\\
				&\cong_{k}\frac{k[X,Y,Z,1/X]}{(Z^2+X+{\alpha}^2+{\beta}^2Y^2)}.\\
			\end{split}
			\]
			Thus $A_u$ is a localization of $k^{[2]}$, and hence a U.F.D implying that $A$ is a U.F.D. 
			
			Since $A$ is a normal domain and $\alpha,\beta$ is integral over $A$ such that $\alpha,\beta,\alpha/\beta\notin A$, it follows that if $g\in A$ then $\alpha g,\beta g,(\alpha/\beta) g\notin A$.  
			Now we prove that $A$ is a rigid ring and thus is not $k^{[2]}$.
			\newline
			\indent
			If possible, suppose $\phi$ is a non-trivial exponential map on $A$. Set $w_1:=w+v$, $l:={\rm deg}_{\phi}(w_1)$, $n:={\rm deg}_{\phi}(u)$ and $m:={\rm deg}_{\phi}(v)$. We first prove that $w_1 $ cannot be in $\Ap$ i.e $l\neq 0$.
			
			\indent
			 Let if possible $l=0$. Then none of $u$ and $v$ is in $\Ap$ as $\phi$ is nontrivial. Thus $n>0$ and $m>0$. In this case from the relation 
			\[w_1^2+u+{\alpha}^2u^2+v^2+{\beta}^2v^2=0\]
			 we get 
			 \[{\alpha}^2(\phi^n(u))^2=({\beta}^2+1)(\phi^m(v))^2.\]
			 Thus \[\phi^n(u)=\left(\frac{1+\beta}{\alpha}\right)(\phi^m(v)),\]
			 which is a contradiction as $\frac{(1+\beta)}{\alpha}$ is not in $k$. Hence $w_1$ is not in $\Ap$ i.e $l\geq1$.
			 
			  Now if $n=0$, then from the relation
			  \[\phi(w_1^2+u+{\alpha}^2u^2+v^2+{\beta}^2v^2)=0,\] 
			 we get
			  \[(\phi^l(w_1))^2= (1+{\beta}^2)(\phi^m(v))^2.\]
			  Thus
			   \[\phi^l(w_1)=(1+\beta)(\phi^m(v)) ,\]
			   which is a contradiction as $\beta$ is not in $k$.
			   Therefore $n\geq 1$. Then again from the relation
			   \[\phi(w_1^2+u+{\alpha}^2u^2+v^2+{\beta}^2v^2)=0,\] we get
			    
			 \[{\alpha}^2(\phi^n(u))^2= (\phi^n(w_1))^2+(1+{\beta}^2)(\phi^n(v))^2.\]
			And hence 
			 \[\phi^n(w_1)={\alpha}(\phi^n(u))+(1+\beta)\phi^n(v),\] 
			 which is a contradiction as $\alpha/\beta$ is not in $k$. Thus $A$ is a rigid factorial $\mathbb{A}^2$-form over $k$ having a retraction to $k$.
			    
		\end{ex}
		The following example shows that Theorem \ref{mtheo1} cannot be generalized for $\mathbb{A}^3$-forms.
		
		\begin{ex}\label{ex 5}
			
		\rm	Let $k$ be any non-perfect field of characteristic $p>0$. Let $\lambda$ be an element in $ k\setminus k^p$ and set $L=k(\lambda)$. Consider the affine $k$-domain 
			\[A=\frac{k[X,Y,Z,T]}{(X^mY-Z^p-\lambda T^p-T)} \text{ for some $m\geq 2$}.\] Let $x,y,z,t$ denote the images of $X,Y,Z,T$ respectively in $A$.
			Then $A$ admits a non-trivial exponential map $\phi: A\rightarrow A[U]$ given by 
			
			\begin{alignat*}{3}
			&\phi(x)&=&x\\
			&\phi(t)&=&t\\
			&\phi(z)&=&z+x^mU\\
			&\phi(y)&=&\frac{(z+x^mU)^p+\lambda t^p+t}{x^m} =y+x^{mp-m} U^p.
		\end{alignat*}
			
			 As $Z^p+ \lambda T^p+ T$ is a coordinate in $L[Z,T]$ but not in $k[Z,T]$, by \cite[Theorem 3.11]{ng_comp}, $\overline{A}:=A\otimes_kL =L[x]^{[2]}=L^{[3]}$ but $A\neq k[x]^{[2]}$. Further, by \cite[Lemma 3.1]{ng_comp}, $A$ is a U.F.D and clearly $A$ is a non-rigid $\mathbb{A}^3$-form having a retraction to $k$ but $A\neq k^{[3]}$.   
			
		\end{ex}

		\begin{rem}\label{remark P.das}
		{\rm 	
			Let $k$ be a field. Let $R$ be a $k$-algebra and $A$ an $R$-algebra such that $A$ is a U.F.D with a retraction to $R$. In \cite{pdas}, P. Das had shown that if $A$ is an $\mathbb{A}^1$-form over $R$ with respect to $L|_k$, i.e, if $A\otimes_kL= (R\otimes_kL)^{[1]}$ for some field extension $L|_k$, then $A=R^{[1]}$. Example \ref{ex 5} also shows that even when $R$ is a P.I.D like $k^{[1]}$, an analogous result for Theorem \ref{factorial_retract_nrigid_trivial} does not hold for $\mathbb{A}^2$-forms over $R$, i.e, Theorem \ref{factorial_retract_nrigid_trivial} does not extend to $\mathbb{A}^2$-forms over P.I.D's.
			
		}
			
		\end{rem}
		
		\noindent
		{\bf Data Availability} { \scriptsize Not applicable as no data was used or analyzed in this work.}
	\newline
	
	\noindent
		{\large \bf Declarations}
	\newline
		
		\noindent
	 {\bf Conflict of Interest} {\scriptsize The author declares no competing interests.}

\end{document}